\theoremstyle{definition}
\newtheorem{definizione}{Definition}[section]
\newtheorem{remark}[definizione]{Remark}
\theoremstyle{plain}
\newtheorem{teorema}[definizione]{Theorem}
\newtheorem*{teorema*}{Theorem}
\theoremstyle{plain}
\newtheorem{lemma}[definizione]{Lemma}
\newtheorem*{property*}{Property 1}
\theoremstyle{plain}
\newtheorem{corollario}[definizione]{Corollary}
\theoremstyle{plain}
\theoremstyle{definition}
\newtheorem{example}[definizione]{Example}
\theoremstyle{plain}
\theoremstyle{plain}
\newcommand{\cbs}[1]{{\mathcal{C}_{BS}(#1)}}
\newcommand{\edges}[1]{{\mathcal E}(#1)}
\newcommand{\partitions}[1]{{\mathcal P}(#1)}
\newcommand{\field}{{\mathbb F}}
\newcommand{\ZZ}{{\mathbb Z}}
\newcommand{\QQ}{{\mathbb Q}}
\newcommand{\kk}{{\mathbf k}}
\newcommand{\innerproduct}[2]{{\left< #1 , #2 \right>}}
\newcommand\ext{\mathrm{ext}}
\DeclareMathOperator\id{id}
\newcommand\Conf{\mathrm{Conf}}
\newcommand\conn{\mathrm{con}}
\newenvironment{AllowDisplayBreaks}{\allowdisplaybreaks}{\ignorespacesafterend}
\newenvironment{salign*}{\begin{equation*}\begin{aligned}}{
    \end{aligned}\end{equation*}\ignorespacesafterend}
\title{Graph cohomologies and rational homotopy type of configuration spaces}	\author{Marcel B\"{o}kstedt, Erica Minuz\footnote{The authors are based in Aarhus University, Science and Thechnology, Department of Mathematics. The work is supported by \textit{Det frie forskningsr\aa{}d, natur og univers} $6108-00539A$.}}
\date\today
\begin{document}
	\maketitle
	We compare the cohomology complex defined by Baranovsky and Sazdanovi\'{c}, that is the $E_{1}$ page of a spectral sequence converging to the homology of the configuration space depending on a graph, with the rational model for the configuration space given by Kriz and Totaro. In particular we generalize the rational model to any graph and to an algebra over any field. We show that, in the case of configuration spaces of point on a even dimensional manifold, the dual of the Baranovsky and Sazdanovi\'{c}'s complex is quasi equivalent to this generalized version of the Kriz's model.
	\tableofcontents	
	\section{Introduction}
In $2006$, in the study of the chromatic polynomial, \textit{Eastwood} and \textit{Huggett} define a generalized configuration space of points in a manifold, depending of a graph \cite{E-H}. Let $M$ is a manifold and $\Gamma$ a graph with set of vertices $V=\{v_{1},..,v_{n}\}$ and set of edges $E(\Gamma)$. The configuration space of $n$ points in $M$, $\Conf(M,\Gamma)$ is defined as 
\begin{equation*}
\Conf(M,\Gamma)=\{(x_{1},\dots,x_{n})\in M^{n}; x_{i}\neq x_{j} \text{ if }e_{i,j}\in E(\Gamma)\}.
\end{equation*} When $\Gamma$ is a complete graph, $\Conf(M,\Gamma)$ coincide with the classical configuration space of $n$ points in a manifold:	\begin{equation*}
\Conf(M,n)=\{(x_{1},\dots,x_{n})\in M^{n}; x_{i}\neq x_{j} \text{ for } i\neq j\}.
\end{equation*}  In $2012$ \textit{Baranovsky} and \textit{Sazdanovi\'{c}} \cite{B-S} define a graph cohomology complexinspired by the one defined by \textit{Helme-Guizon} and \textit{Rong}  in \cite{H-G}. We denote this complex by $\mathcal{C}_{BS}$. They prove the existence a spectral sequence with $E_{1}$ page given $\mathcal{C}_{BS}$ converging to the homology of the \textit{Eastwood} and \textit{Huggett's} configuration space \cite{B-S}. The spectral sequence for the case where the graph is the complete graph was given in $1991$ by \textit{Bendersky} and \textit{Gitler} \cite{BG}.	

The cohomology and the rational homotopy type of $\Conf(M,n)$ has been studied before. If the manifold is $\mathbb{R}^d$, the results about the cohomology of $\Conf(\mathbb{R}^d,n)$ is due to Arnold
\cite{Arnol'd1969} in the case $r=2$ and Cohen
\cite{COHEN199519} for $r\geq 3$. In $1994$ \emph{Fulton} and \emph{MacPherson}
\cite{FM} constructed a model for the rational homotopy type of
$\Conf(X,n)$, where $X$ is a non singular, compact, complex variety.  This
model depends on the cohomology ring $H^{\ast}(X,\mathbb{Q})$, the
orientation and the Chern classes. The same year, \emph{Kriz} in \cite{Kriz} described a differential
graded algebra $E[n]$ that is a rational model for $\Conf(X,n)$ and that
is independent from the Chern classes. %He defined it as
%\begin{equation*}
%E[n]= H^{\ast}(X^{n},\mathbb{Q})[G_{a,b}]/{\sim}
%\end{equation*}
%where $G_{a,b}$ are generators of degree $2m-1$, $a$,
%$b\in\{1,\dots,n\}$, $a\neq b$ and $\sim$ are the relations
%\begin{itemize}
%	\item $G_{a,b}=G_{b,a}$
%	\item $p_{a}^{\ast}(x)G_{a,b}=p_{b}^{\ast}(x)G_{a,b}$,
%	$x\in H^{\ast}(X)$
%	\item $G_{a,b}G_{b,c}+G_{b,c}G_{c,a}+G_{c,a}G_{a,b}=0$,
%\end{itemize}
%and $p_a^\ast$ is the pullback of the projection
%$p_a:X^n\rightarrow X$. The differential is given by
%\begin{equation*}
%d(G_{a,b})=p_{a,b}^{\ast}\Delta
%\end{equation*}
%where $\Delta$ is the class of the diagonal.
$E[n]$ was described in the same time in the work by \emph{Totaro}
\cite{TOTARO} and it appeared to be isomorphic to the $E_{2}$ page of
the Larey spectral sequence of the inclusion
$\Conf(X,n)\hookrightarrow X^{n}$.  The algebra $E[n]$ will be here discussed in details in \emph{Theorem}~\ref{kriz}. Later, \emph{Lambecht} and \emph{Stanley} studied the rational models
for configurations spaces where $X$ is a simply connected closed
manifold. They showed that a simply connected closed manifold always
admits a Poincaré duality model $A$ \cite{ASENS_2008_4_41_4_497_0}. In
$2004$ they described the case $k=2$, a configuration space of $2$
points in a manifold \cite{Lambrechts2004} and they defined a model for
its rational homotopy type. In $2008$ they
presented a potential model for the general case
\cite{lambrechts2008}. This commutative graded algebra is denoted by  $G_A(n)$. They conjectured that if $X$ is a simply
connected $m$-manifold, $G_A(n)$ is a rational model for $\Conf(X,n)$.
% where $G_A(n)$ is defined as follows.
%
%
%\begin{equation*}
%G_A(n)=A^{\otimes n}[G_{a,b}]/{\sim}
%\end{equation*}
%where $G_{a,b}$ are generators of degree $m-1$, $a$,
%$b\in\{1,\dots,n\}$, $a\neq b$, and $\sim$ are the relations
%\begin{itemize}
%	\item $G_{a,b}=(-1)^m G_{b,a}$
%	\item $G_{a,b}^2=0$ if $r$ is odd
%	\item $p_{a}^{\ast}(x)G_{a,b}=p_{b}^{\ast}(x)G_{a,b}$, $x\in A$
%	\item $G_{a,b}G_{b,c}+G_{b,c}G_{c,a}+G_{c,a}G_{a,b}=0$,
%\end{itemize}
%Here $p_a^\ast:A\rightarrow A^n$ is given by $p_a^\ast(x)=1\otimes\dots\otimes1 \otimes x\otimes 1\otimes\dots\otimes 1$ and $x$ is in the position $a$.  The differential is given by
%\begin{equation*}
%d(G_{a,b})=p_{a,b}^{\ast}\Delta
%\end{equation*}
%where $\Delta$ is the class of the diagonal.
%
In $2019$ \emph{Idrissi} \cite{idrissi:hal-01438861} proved the
conjecture true for the real homotopy type and manifolds of dimension
at least $4$. \emph{Campos} and \emph{Willwacher} few years before
\cite{CRT} constructed a real model for the configuration space of
points in a manifold.

In this article, we compare the graph cohomology complex
	$\mathcal{C}_{BS}(\Gamma)$ defined by \emph{Baranovsky} and
	\emph{Sazdanovi\'{c}} in \cite{B-S} and described in \emph{Section
	}\ref{BS}, with the model for the rational homotopy type given by
	\emph{Kriz} and \emph{Totaro} denoted by $E[n]$. In the second and third sections we give the definition of these two cohomology complexes. In the following section, we describe Frobenius algebras and
	give some technical results that will be later used. In \emph{Section
	}\ref{dual} we define the dual of the complex
	$(\mathcal{C}_{BS}(\Gamma),\partial)$ that we will call
	$(\mathcal{C}_{BS}(\Gamma)^{\ast},\delta)$.  The complex
	$(\mathcal{C}_{BS}(\Gamma),\partial)$ is the $E_{1}$ page of a spectral
	sequence converging to the relative cohomology
	$H^{\ast}(M^{n}, Z_{\Gamma}, R)$, and if the space $M$ is a compact oriented manifold of
	dimension $m$ the cohomology is isomorphic to the homology
	$H_{mn-\ast}(\Conf(M,\Gamma), R)$. In this case the dual complex
	$(\mathcal{C}_{BS}^{\ast}(\Gamma),\delta)$ converges to the cohomology
	of the configuration space $H^{mn-\ast}(\Conf(M,\Gamma), R)$. On the other hand, the
	cohomology of the complex $E[n]$ is the cohomology of the
	configuration space.  By \emph{Remark} \ref{remark3}, if $M$ is a
	compact K\"{a}hler manifold and the coefficient ring is $R=\mathbb{Q}$,
	the spectral sequence degenerates at page $E_{2}$. In this case the
	two complexes are quasi equivalent. In the following sections we prove
	that there is in general a quasi equivalence between
	$C_{BS}(\Gamma)^{\ast}$ and a generalized version of $E[n]$, called $R(\Gamma,A)$. In the definition of this generalised complex, a ring $\Delta[G_{a,b}]/\sim$
	is involved. This is the exterior algebra over the generator
	corresponding to the edges in the graph $\Gamma$ quotient by a
	relation, that we call the \emph{generalised Arnold relation}. We
	denote this ring by $R(\Gamma)$. \emph{Section }\ref{ring} describes
	it for a complete graph $K_n$, and in this case the relation is
	the usual Arnold relation. We will call it $R(K_n)$. In the following
	section we define $R(\Gamma,A)$ for an even dimensional manifold. It depends on a graph $\Gamma$ not
	necessarily complete, and a Frobenius algebra $A$ over
	any field. In the case of an even dimensional formal manifold and a complete graph $\Gamma$, $R(\Gamma,A)$ coincide with the CDGA that \textit{Idrissi} \cite{idrissi:hal-01438861} proves to be a real model for $\Conf(M,n)$. \emph{Section
	}\ref{section quasi eq} contains the main theorem of the chapter.
	\begin{teorema*}
		Let $S\subseteq\Gamma$. The map
		\begin{align*}
		F: \cbs \Gamma^* &\to R(\Gamma, A)\\
		F(G_{S} \otimes x) &= [G_{S}\otimes x ]
		\end{align*}
		is a quasi equivalence.
	\end{teorema*}
	In \cite{cs_and_masseyTF} \emph{Thomas} and \emph{Felix} prove that
	the $mn$ suspension of the $E_{2}$ term of the \emph{Bendersky-Gitler}
	spectral sequence is isomorphic to the $E_{2}$ term of of the
	\emph{Cohen} and \emph{Taylor} spectral sequence of which the
	\emph{Kriz}'s model is a special case. Our theorem presents an
	alternative proof and generalization of this result.
	% It is an open question if the spectral sequence has a multiplicative
	% structure.
	In the last section we discuss the chain complex
	$\mathcal{C}_{BS}(\Gamma)^{\ast}/I(\Gamma)$, where $I(\Gamma)$ is the
	ideal generated by the generalised Arnold relation and we show that it
	is isomorphic to $R(\Gamma, A)$.\\
	
	\textbf{Acknowledgments.} We would like to thank Najib Idrissi and Søren Galatius for reading the second author's Ph.D. thesis, of which this article was part. The second author is especially grateful for Galatius very detailed corrections. Finally, we thank Thomas Willwacher for fruitful conversations.
	
	\section{Baranovsky-Sazdanovi\'{c}'s graph cohomology}\label{BS}

	This section presents the graph complex defined by \emph{Baranovsky}
	and \emph{Sazdanovi\'{c}} in \cite{B-S}. Their definition is inspired
	by the work by \emph{Helme-Guizon} and \emph{Rong} \cite{H-G}, whose
	construction develops from the cohomology theory defined by
	\emph{M. Khovanov} in \cite{khovanov}. There he associates to each
	link a family of cohomology groups whose Euler characteristic is the
	Jones polynomial of the link.  \emph{Helme-Guizon's} and \emph{Rong's}
	graph cohomology expands the \emph{Khovanov's} definition associating
	to each graph, graded cohomology groups whose Euler characteristic is
	the chromatic polynomial of the graph. \emph{Baranovsky} and
	\emph{Sazdanovi\'{c}} in \cite{B-S} prove that there is a spectral
	sequence that relates the graph cohomology defined by
	\emph{Helme-Guizon} and \emph{Rong} with the cohomology of
	configuration spaces, verifying a conjecture posed by \emph{Khovanov}.
	
	We give here the definition of the graph cohomology complex. We refer
	to \cite{B-S} for the definitions and the notation with the exception
	of the notation of the complex that we will call
	$\mathcal{C}_{BS}(\Gamma)$. Then, we state some results relating the
	complex to the homology of configuration spaces.
	
	Let $A$ be a graded commutative algebra over a commutative ring $R$, and 
	assume that $A$ is a projective $R$-module. Let $\Gamma$ be a finite
	graph, $V=V(\Gamma)$ be the set of vertices and $E(\Gamma)$ the set of
	edges. We choose an order on the vertices. This gives an orientation
	on every edge $\alpha$ in $E(\Gamma)$, if $\alpha$ connects the
	vertices $i$ and $j$ and $i\leq j$, $\alpha:i\rightarrow j$. For any
	subset $S$ of $E(\Gamma)$, we denote by $[\Gamma:S]$ the subgraph that
	has as vertices the same vertices of $\Gamma$ and as edges the edges
	in $S$, we denote by $l(S)$ the number of connected components of
	$[\Gamma:S]$.
	\begin{definizione}
		[\cite{B-S}]\label{def Cbs} Let $\Lambda$ be an exterior algebra
		over $R$ with generators $e_{\alpha}$, $\alpha\in E(\Gamma)$, and
		$e_{S}$ be the exterior product of $e_{\alpha}$, $\alpha\in S$,
		ordered with the lexicographic order of the pair $(i,j)$ where
		$\alpha:i\rightarrow j$.
		
		The bigraded complex, that we will here denote by
		$\mathcal{C}_{BS}(\Gamma)$, is defined as
		\begin{equation*}
		\mathcal{C}_{BS}(\Gamma)=\Lambda\otimes A^{\otimes n}/e_{\alpha}\otimes (a[i]-a[j]),
		\end{equation*}
		the algebra $\Lambda\otimes A^{\otimes n}$ quotient by the relation
		$e_{\alpha}\otimes (a[i]-a[j])$, where $a\in A$,
		$\alpha:i\rightarrow j\in E(\Gamma)$ and $a[i]$ denotes the element
		$1^{\otimes i-1}\otimes a\otimes 1^{\otimes n-i}\in\nobreak
		A^{\otimes n}$.  The complex has a bigrading given by the sum of the
		grading of the elements $e_{\alpha}$ of bidegree $(0,1)$ and the
		elements $1\otimes a_{1}\otimes\dots\otimes a_{n}$ with bidegree
		$(\smash{\sum_{i=1}^{n}}\deg_{A}a_{i},0)$, so the degree of 
		$e_{S}\otimes a_{1}\otimes\dots\otimes a_{n}$ in $\mathcal{C}_{BS}(\Gamma)$ is
		$(\smash{\sum_{i=1}^{n}}\deg_{A}a_{i},
		|S|)$.
		
		The differential of degree $(0,1)$ is given by the exterior product
		\begin{equation*}
		\partial=\sum_{\alpha\in E(\Gamma)}e_{\alpha}
		\end{equation*} 
	\end{definizione}
	\begin{remark}
		The assumption of $A$ be a projective $R$-module is used in the
		proof of the convergence of the spectral sequence. We refer to
		\cite{B-S} for the definition of the spectral sequence and the
		proof.
	\end{remark}
	\begin{remark}
		The complex $C_{BS}(\Gamma)$ is isomorphic
		to \begin{equation*}\bigoplus_{S\subseteq E(\Gamma)}e_{S}\otimes
		A^{l(S)}.\end{equation*} That is for every $n\in \mathbb{N}$
		\begin{equation*}C_{BS}^{n}(\Gamma)=\bigoplus_{S\subseteq E(\Gamma),
			|S|=n}e_{S}\otimes A^{l(S)}.\end{equation*} For
		$S\subset E(\Gamma)$, each term $a_{i}$ of the element
		$e_{S}\otimes a_{1}\otimes\dots\otimes a_{l(S)}\in A^{l(S)}$,
		corresponds to a component in $[\Gamma:S]$. In the case $S=\emptyset$,
		the components of $[\Gamma:\emptyset]$ are the vertices in $\Gamma$.
		We can construct a map
		\begin{equation*}
		\phi : (\Lambda\otimes A^{\otimes n}/{\sim})\rightarrow\bigoplus_{S\subseteq E(\Gamma)}e_{S}\otimes A^{l(S)}
		\end{equation*}
		such that if $\alpha$ is an edge in $E(\Gamma)$,
		$\alpha:i\rightarrow j$, then
		\begin{equation*}
		\phi(e_{\alpha}\otimes a_{1}\otimes\dots\otimes a_{n})=e_{\alpha}\otimes a_{1}\otimes\dots\otimes (-1)^{\tau} a_{i}a_{j}\otimes\dots\otimes a_{l(\alpha)}.
		\end{equation*}
		The terms $a_{i}$ and $a_{j}$ are multiplied with a sign that is the
		Kozul sign given by the permutation in the tensor product that
		brings $a_{j}$ close to $a_{i}$, here $l(\alpha)=n-1$. The inverse
		is given by
		\begin{align*}
		\MoveEqLeft \phi^{-1}(e_{\alpha}\otimes a_{1}\otimes\dots\otimes
		(-1)^{\tau} a_{i}a_{j}\otimes\dots\otimes a_{n-1})
		\\
		&=e_{\alpha}\otimes a_{1}\otimes\dots\otimes (-1)^{\tau}
		a_{i}a_{j}\otimes\dots\otimes 1\otimes\dots\otimes a_{n-1}
		\end{align*}
		where $1$ is in the position $j$. Then, it is enough to notice that
		\begin{equation*}
		e_{\alpha}\otimes a_{1}\otimes\dots\otimes (-1)^{\tau} a_{i}a_{j}\otimes\dots\otimes 1\otimes\dots\otimes a_{n-1}
		\end{equation*}
		is in the same equivalence class of
		$e_{\alpha}\otimes a_{1}\otimes\dots\otimes a_{n}$, since for
		$a$,$b\in A$,
		\begin{equation*}
		a\otimes b=(a\otimes 1)(1\otimes b)\sim(1\otimes a)(1\otimes b)=(1\otimes ab)
		\end{equation*}
		and
		\begin{equation*}
		(a\otimes 1)(1\otimes b)\sim(a\otimes 1)(b\otimes 1)=(ab\otimes 1).
		\end{equation*}
		
	\end{remark}
	
	\begin{remark}
		[\cite{B-S}] The differential
		$\partial:\mathcal{C}_{BS}^{k}\rightarrow\mathcal{C}_{BS}^{k+1}$
		induced by $\Phi$ on
		\begin{equation*}C_{BS}^{\ast}(\Gamma)=\bigoplus_{S\subseteq
			E(\Gamma)}e_{S}\otimes A^{l(S)}\end{equation*} is
		\begin{align*}
		\partial(e_{S}\otimes a_{1}\otimes\dots\otimes a_{l(S)})
		=\sum_{\alpha\in E(\Gamma),
			l(S\cup\alpha)=l(S)}e_{\alpha}e_{S}\otimes
		a_{1}\otimes\dots\otimes a_{l(S)}\\ +\sum_{\alpha\in
			E(\Gamma),
			l(S\cup\alpha)=l(S)-1}(-1)^{\tau}e_{\alpha}e_{S}\otimes
		a_{1}\otimes\dots\otimes a_{i}\cdot a_{j}\otimes\dots
		\otimes a_{l(S)}.
		\end{align*}

		The first term of the sum represents the case where the edge
		$\alpha$ connects two vertices of the edges in $S$ that are
		in the same component. Therefore, the number of components of
		$[\Gamma:S]$ and $[\Gamma:S\cup \alpha]$ are the same, so
		$l(S)=l(S\cup\alpha)$. The second term of the sum refers to
		the case where the edge $\alpha$ connects two different
		components, so $l(S\cup\alpha)=l(S)-1$. Suppose that
		$\alpha:i\rightarrow j$, and that $a_{h}$ is the term
		corresponding to the component containing $i$, and $a_k$ to the component containing $j$. Then $\tau$ is
		the Kozul sign given by the permutation in the tensor
		product that moves $a_{k}$ to the position immediate to the
		right of $a_{h}$.
		
	\end{remark}
	\begin{example}
		Let $\Gamma$ be $K_{3}$, the complete graph with $3$
		vertices. The order of the vertices induces an order on the
		edges given by the lexicographic order
		$E(K_3)=\{e_{0,1},e_{0,2}, e_{1,2}\}$ and an orientation on
		the edges.
		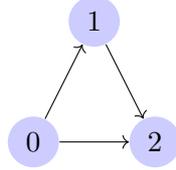
\begin{figure}[h!]
			\begin{center}
				\begin{tikzpicture}
				[scale=.8,auto=left,every node/.style={circle,fill=blue!20}]
				
				\node (n5) at (12,3) {1};
				\node (n1) at (13,1) {2};
				\node (n2) at (11,1)  {0};
				\foreach \from/\to in {n2/n1, n2/n5, n5/n1}
				\draw [->](\from) -- (\to);
				
				\end{tikzpicture}
				\caption{The graph $K_3$.}
			\end{center}
			
		\end{figure}
		
		The chain $\mathcal{C}_{BS}(\Gamma)$ in this case is the
		following
		\begin{equation*}
		A^{\otimes 3}\rightarrow A^{\otimes 2}\oplus A^{\otimes 2}\oplus A^{\otimes 2}\rightarrow A\oplus A\oplus A\rightarrow A
		\end{equation*}
		The chain groups are given by
		\begin{equation*}
		\mathcal{C}_{BS}(\Gamma)^{n}=\bigoplus_{|S|=n}A^{\otimes l(S)}
		\end{equation*}
		where $S$ is a subset of $E(\Gamma)$, and $l(S)$ is the number
		of components of $[\Gamma:S]$. The picture shows the
		components of $[\Gamma:S]$ with increasing cardinality of
		$S$ and the differential that adds every time an edge in
		$[\Gamma:S]$, connecting its components.
		Let $a_0\otimes a_1\otimes a_2\in \mathcal{C}_{BS}^0$, then
		\begin{align*}
		\MoveEqLeft \partial(a_0\otimes a_1\otimes a_2)
		\\
		&=e_{0,1}\otimes a_0a_1\otimes a_2+e_{1,2}\otimes
		a_0\otimes a_1a_2+(-1)^{|a_1||a_2|}e_{0,2}\otimes
		a_0a_2\otimes a_1
		\end{align*}
		Notice that the fact that $\partial^2=0$ is provided by the
		sign coming from the graded commutativity of $A$. For
		example,
		\begin{align*}
		\MoveEqLeft \partial_{e_{0,1}}((-1)^{|a_1||a_2|}e_{0,2}\otimes
		a_0a_2\otimes a_1)
		\\
		&=(-1)^{|a_1||a_2|}e_{0,1}e_{0,2}\otimes a_0a_2
		a_1=e_{0,1}e_{0,2}\otimes a_0a_1 a_2
		\end{align*}
		and
		\begin{equation*}
		\partial_{e_{0,2}}(e_{0,1}\otimes a_0a_1\otimes a_2)=e_{0,2}e_{0,1}\otimes a_0a_1a_2=-e_{0,1}e_{0,2}\otimes a_0a_1a_2.
		\end{equation*}
		All the terms in $\partial^2$ given by adding an edge $e_i$
		and then $e_j$ cancel with the terms given by adding the
		edges in opposite order, so $\partial^2=0$.
		
		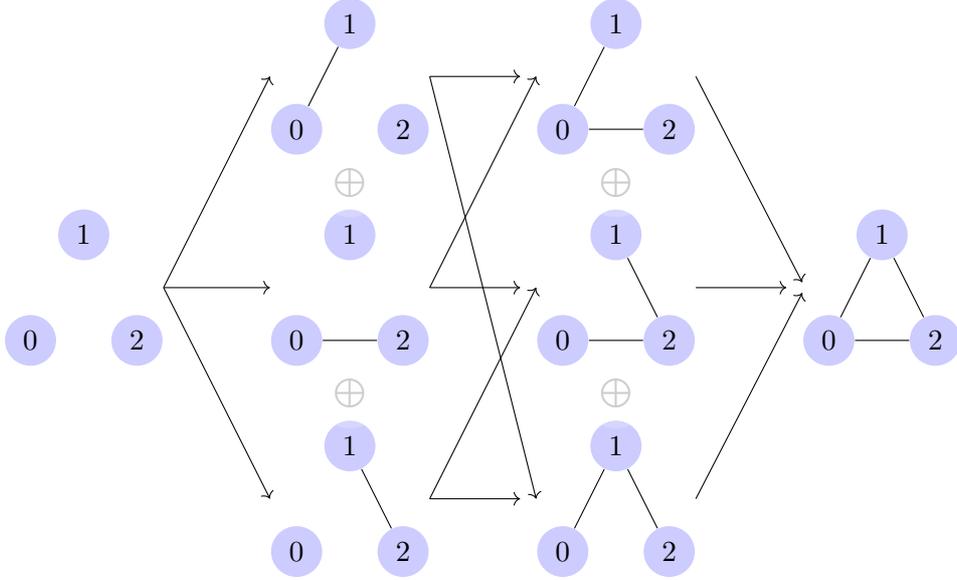
\begin{figure}[h!]
			\begin{tikzpicture}	
			[scale=.7,auto=left,every node/.style={circle,fill=blue!20}]
			
			\node (n1) at (1,2) {1};
			\node (n2) at (2,0) {2};
			\node (n0) at (0,0)  {0};

			\node (n1) at (6,6) {1};
			\node (n2) at (7,4) {2};
			\node (n0) at (5,4)  {0};
			\foreach \from/\to in {n0/n1}
			\draw (\from) -- (\to);
			
			\node (n1) at (6,2) {1};
			\node (n2) at (7,0) {2};
			\node (n0) at (5,0)  {0};
			\foreach \from/\to in {n0/n2}
			\draw (\from) -- (\to);
			
			\node (n1) at (6,-2) {1};
			\node (n2) at (7,-4) {2};
			\node (n0) at (5,-4)  {0};
			\foreach \from/\to in {n1/n2}
			\draw (\from) -- (\to);
			
			\node (n1) at (11,6) {1};
			\node (n2) at (12,4) {2};
			\node (n0) at (10,4)  {0};
			\foreach \from/\to in {n0/n1, n0/n2}
			\draw (\from) -- (\to);
			
			\node (n1) at (11,2) {1};
			\node (n2) at (12,0) {2};
			\node (n0) at (10,0)  {0};
			\foreach \from/\to in {n1/n2, n0/n2}
			\draw (\from) -- (\to);
			
			\node (n1) at (11,-2) {1};
			\node (n2) at (12,-4) {2};
			\node (n0) at (10,-4)  {0};
			\foreach \from/\to in {n0/n1, n1/n2}
			\draw (\from) -- (\to);

			\node (n1) at (16,2) {1};
			\node (n2) at (17,0) {2};
			\node (n0) at (15,0)  {0};
			\foreach \from/\to in {n0/n1, n0/n2, n1/n2}
			\draw (\from) -- (\to);
			
			\draw [->](2.5,1)->(4.5,5);
			\draw [->](2.5,1)->(4.5,1);
			\draw [->](2.5,1)->(4.5,-3);
			
			\draw [->](7.5,1)->(9.5,5);
			\draw [->](7.5,1)->(9.2,1);
			
			\draw [->](7.5,5)->(9.2,5);
			\draw [->](7.5,5)->(9.5,-3);
			
			\draw [->](7.5,-3)->(9.2,-3);
			\draw [->](7.5,-3)->(9.5,1);
			
			\draw [->](12.5,5)->(14.5,1.1);
			\draw [->](12.5,1)->(14.2,1);
			\draw [->](12.5,-3)->(14.5,0.9);

			\node [fill=white, circle,opacity=.2,text opacity=1]at (6,3){$\bigoplus$};
			\node [fill=white,opacity=.2,text opacity=1]at (6,-1){$\bigoplus$};
			\node [fill=white,opacity=.2,text opacity=1]at (11,3){$\bigoplus$};
			\node [fill=white,opacity=.2,text opacity=1]at (11,-1){$\bigoplus$};
			\end{tikzpicture}	
			\caption{An example of $\mathcal{C}_{BS}(\Gamma)$ when
				$\Gamma=K_3$, the complete graph with three vertices.}
		\end{figure}
		
	\end{example}

	As anticipated, the complex $\mathcal{C}_{BS}$ is related to
	the homology of configuration spaces depending on a graph, as
	defined by \emph{Eastwood} and
	\emph{Hugget} \cite{E-H}.
	
	Let $M$ be simplicial complex and $\Gamma$ a graph as defined
	in the first section. Let $\alpha:i\rightarrow j$ be an edge
	in $E(\Gamma)$, $Z_{\alpha}$ be the diagonal of the
	Cartesian product $M^{n}$ corresponding to the edge $\alpha$,
	\begin{equation*}
	Z_{\alpha}=\{(m_{1},\dots,m_{n})\in M^{n}; m_{i}=m_{j}\}
	\end{equation*}
	and
	\begin{equation*}
	Z_{\Gamma}=\bigcup_{\alpha\in E(\Gamma)}Z_{\alpha}.
	\end{equation*}
	We define the graph
	configuration space of $M$ dependent on $\Gamma$ to be
	\begin{equation*}
	\Conf(M,\Gamma)=M^{n}\backslash Z_{\Gamma}.
	\end{equation*}
	If $M$ is a manifold the definition corresponds to the
	generalized configuration space depending on a graph studied
	by \emph{Eastwood} and \emph{Hugget} in \cite{E-H}.
	
	\emph{Baranovsky} and \emph{Sazdanovi\'{c}} in \cite{B-S}
	prove that $\mathcal{C}_{BS}$ is the $E_{1}$ page of a
	spectral sequence converging to the cohomology of such
	configuration space. This confirms a conjecture by
	\emph{Khovanov} that there is a spectral sequence between the
	graph homology defined by \emph{L.Helme-Guizon} and
	\emph{Y. Rong} and the work by \emph{Eastwood} and
	\emph{Huggett}.
	\begin{teorema}
		[\cite{B-S}] Assume that the cohomology algebra $A=H^{*}(M,R)$
		is a projective $R$-module and that $\Gamma$ has no loops or
		multiple edges. There exist a spectral sequence with $E_{1}$
		term isomorphic to $\mathcal{C}_{BS}^{*}$ which converges to
		the relative cohomology $H^{*}(M^{n},Z_{\Gamma};R)$.
	\end{teorema}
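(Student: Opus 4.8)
The plan is to obtain the spectral sequence as the Mayer--Vietoris (\v{C}ech) spectral sequence attached to the closed cover of $Z_{\Gamma}$ by the diagonals $Z_{\alpha}$, $\alpha\in E(\Gamma)$, and then to read off its first page. The key elementary observation is that every finite intersection is again a generalised diagonal: for $S\subseteq E(\Gamma)$,
\[
Z_{S}:=\bigcap_{\alpha\in S}Z_{\alpha}=\bigl\{(m_{1},\dots,m_{n})\in M^{n}:\ m_{i}=m_{j}\ \text{whenever}\ i,j\ \text{lie in the same component of}\ [\Gamma:S]\bigr\},
\]
so that choosing one coordinate per connected component gives $Z_{S}\cong M^{l(S)}$, with $Z_{\emptyset}=M^{n}$ and $l(\emptyset)=n$. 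The hypothesis that $\Gamma$ has no loops or multiple edges guarantees that the $Z_{\alpha}$ are pairwise distinct proper subspaces and that subsets of $E(\Gamma)$ correspond bijectively to the spanning subgraphs $[\Gamma:S]$, so the combinatorics of this cover matches that of $\mathcal{C}_{BS}^{*}$.

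First I would build the double complex. Since $M$ is a simplicial complex we may, after a subdivision, arrange that $M^{n}$ carries a CW structure in which all the $Z_{\alpha}$ (hence all the $Z_{S}$) are subcomplexes; working with cellular cochains, set
\[
D^{p,q}=\bigoplus_{\substack{S\subseteq E(\Gamma)\\ |S|=p}}C^{q}(Z_{S};R),
\]
with vertical differential the cellular coboundary and horizontal differential the alternating sum of the restriction maps $C^{q}(Z_{S\setminus\alpha};R)\to C^{q}(Z_{S};R)$, the signs being dictated by the chosen order on the edges exactly as in the definition of $e_{S}$. A cell $\sigma$ of $M^{n}$ contributes to the $S$-summand precisely when $\sigma\in Z_{S}$, i.e.\ when $S\subseteq T(\sigma):=\{\alpha:\sigma\in Z_{\alpha}\}$; hence the part of the horizontal complex supported on a fixed $\sigma$ is the augmented cochain complex of the full simplex on $T(\sigma)$, which is acyclic if $T(\sigma)\neq\emptyset$ and equals $R$ in degree $0$ if $T(\sigma)=\emptyset$, i.e.\ if $\sigma\notin Z_{\Gamma}$. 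Therefore $\mathrm{Tot}^{\bullet}D$ computes $H^{*}(M^{n},Z_{\Gamma};R)$, and filtering by the \v{C}ech degree $p$ yields a spectral sequence which converges because $0\le p\le|E(\Gamma)|$ is finite. (Alternatively, and more cleanly at the point-set level, one can use the Koszul-type complex of sheaves $\bigotimes_{\alpha}\bigl[\,\underline{R}_{M^{n}}\to i_{\alpha*}\underline{R}_{Z_{\alpha}}\,\bigr]$ resolving $j_{!}\underline{R}_{M^{\Gamma}}$, where $j\colon M^{\Gamma}\hookrightarrow M^{n}$, and the associated hypercohomology spectral sequence.)

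Next I would identify the pages. By construction $E_{1}^{p,q}=\bigoplus_{|S|=p}H^{q}(Z_{S};R)$, and since $Z_{S}\cong M^{l(S)}$ the hypothesis that $A=H^{*}(M;R)$ is a projective (hence flat) $R$-module lets us apply the K\"{u}nneth theorem to get $H^{*}(Z_{S};R)\cong A^{\otimes l(S)}$; thus $E_{1}$ is, as a bigraded $R$-module, exactly $\mathcal{C}_{BS}^{*}$. It then remains to check that $d_{1}$ is the Baranovsky--Sazdanovi\'{c} differential $\partial$. Here $d_{1}$ is the alternating sum, over edges $\alpha\notin S$, of the maps $H^{*}(Z_{S};R)\to H^{*}(Z_{S\cup\alpha};R)$ induced by the inclusions $Z_{S\cup\alpha}\hookrightarrow Z_{S}$. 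Under the identifications with products this inclusion is the identity when $\alpha$ joins two vertices already in one component of $[\Gamma:S]$ (so $l(S\cup\alpha)=l(S)$), and is the diagonal $M^{l(S)-1}\hookrightarrow M^{l(S)}$ on the two merged factors when $\alpha$ joins two distinct components (so $l(S\cup\alpha)=l(S)-1$); passing to cohomology the first case gives the identity, and the second multiplies the two relevant tensor factors via $\mu\colon A\otimes A\to A$. Together with the \v{C}ech signs and the Koszul signs produced by reordering tensor factors --- precisely the lexicographic-order and sign conventions built into the definition of $\mathcal{C}_{BS}$ --- this reproduces the formula for $\partial$ recalled above. Hence $(E_{1},d_{1})\cong(\mathcal{C}_{BS}^{*},\partial)$ and the spectral sequence converges to $H^{*}(M^{n},Z_{\Gamma};R)$.

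The hard part is the first step: turning the closed cover $\{Z_{\alpha}\}$ into an honest resolution. For closed covers Mayer--Vietoris is not automatic, and the clean ``one cell at a time'' acyclicity argument above requires a triangulation of $M^{n}$ adapted to all the diagonals simultaneously, which takes some care to arrange (the sheaf-theoretic variant trades this for the standard machinery of hypercohomology together with the identification $\mathbb{H}^{*}(M^{n};j_{!}\underline{R})\cong H^{*}(M^{n},Z_{\Gamma};R)$). Once the spectral sequence is in place, the description of $E_{1}$ is routine K\"{u}nneth bookkeeping and the identification of $d_{1}$ with $\partial$ is a somewhat tedious but mechanical sign check against the definition of $\mathcal{C}_{BS}$. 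This theorem is due to Baranovsky and Sazdanovi\'{c} \cite{B-S}; the route sketched here is essentially a streamlined account of their argument.
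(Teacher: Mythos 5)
The paper gives no proof of this theorem: it is quoted verbatim from Baranovsky--Sazdanovi\'{c} \cite{B-S}, with the authors explicitly deferring both the construction of the spectral sequence and its proof to that reference. Your Mayer--Vietoris double-complex argument (intersections $Z_{S}\cong M^{l(S)}$, cell-by-cell acyclicity of the augmented simplex complex on $T(\sigma)$ identifying the total complex with $C^{*}(M^{n},Z_{\Gamma};R)$, K\"{u}nneth plus the diagonal-induces-multiplication identification of $d_{1}$ with $\partial$) is essentially the construction of \cite{B-S} (itself in the spirit of Bendersky--Gitler) and is sound in outline, with the genuinely delicate point being exactly the one you flag, namely producing a cell structure on $M^{n}$ adapted to all the diagonals (or substituting the sheaf-theoretic resolution of $j_{!}\underline{R}$).
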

	\begin{remark}[\emph{Remark }$4$ \cite{B-S}]
		When $M$ is a compact $R$-oriented manifold of dimension
		$m$, the relative cohomology groups
		$H^{*}(M^{n},Z_{\Gamma};R)$ are isomorphic to the homology
		groups $H_{nm-\ast}(\Conf(M,\Gamma);R)$ by Lefschetz duality.
	\end{remark}
	Moreover in the case where $M$ is a K\"{a}hler manifold the
	following result holds.
	\begin{remark}
		[\cite{B-S}]\label{remark3} If $M$ is a compact K\"{a}hler
		manifold and the coefficient ring $R$ is the rationals
		$\mathbb{Q}$ the spectral sequence degenerates at page
		$E_{2}$.
	\end{remark}

	\section{The Kriz model}\label{Kriz}
	In this section we describe the rational model for the configuration space
	of points in a complex projective variety defined by
	\emph{Kriz} in \cite{Kriz}.
	
	Let $X$ be a smooth projective variety over $\mathbb{C}$ and $\Conf(X,n)$
	be the ordered configuration space of $n$ points in a space $X$,
	\begin{equation*}
	\Conf(X,n)=X^{n}\setminus\bigcup_{i\neq
		j}\Delta_{i,j}
	\end{equation*}
	$\Delta_{i,j}=\{(x_{1},\dots, x_{n})\in X^{n}; x_{i}=x_{j}\}$. For
	$a$, $b\in\{1,\dots,n\}$, $a\neq b$, let
	$p_{a}^{\ast}:H^{\ast}(X)\rightarrow H^{\ast}(X^{n})$ the pullback of
	the projection $p_{a}:X^{n}\rightarrow X$ to the $a$-th coordinate and
	let $p_{a,b}^{\ast}:H^{\ast}(X^{2})\rightarrow H^{\ast}(X^{n})$ the
	pullback of the projection $p_{a, b}:X^{n}\rightarrow X^{2}$. Let
	$\Delta\in H^{\ast}(X^{2})$ be the class of the diagonal.
	\begin{teorema}[\cite{Kriz}]\label{kriz}
		Let $X$ be a complex projective variety of complex dimension
		$m$. Then the space $\Conf(X,n)$ has a model $E(n)$ that is isomorphic
		to
		\begin{equation*}
		H^{\ast}(X^{n},\mathbb{Q})[G_{a,b}]
		\end{equation*}
		where $G_{a,b}$ are generators of degree $2m-1$, $a$,
		$b\in\{1,\dots,n\}$, $a\neq b$ modulo the relations
		\begin{itemize}
			\item $G_{a,b}=G_{b,a}$
			\item $p_{a}^{\ast}(x)G_{a,b}=p_{b}^{\ast}(x)G_{a,b}$,
			$x\in H^{\ast}(X)$
			\item $G_{a,b}G_{b,c}+G_{b,c}G_{c,a}+G_{c,a}G_{a,b}=0$
		\end{itemize}
		The differential is given by $d(G_{a,b})=p_{a,b}^{\ast}\Delta$.
	\end{teorema}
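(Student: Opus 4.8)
The plan is to realize $E(n)$ as the (suitably regraded) Leray spectral sequence of the open inclusion $j\colon F(X,n)\hookrightarrow X^{n}$, equipped with its first nontrivial differential, and then to prove degeneration by a weight argument so that the cohomology of $E(n)$ recovers $H^{\ast}(F(X,n);\mathbb{Q})$; formality of $X$ finally promotes this to an equivalence of rational models. First I would set up the Leray spectral sequence $E_{2}^{p,q}=H^{p}(X^{n},R^{q}j_{\ast}\mathbb{Q})\Rightarrow H^{p+q}(F(X,n);\mathbb{Q})$. Since the complement $X^{n}\smallsetminus F(X,n)=\bigcup_{a\neq b}\Delta_{a,b}$ is a union of smooth subvarieties, each $\Delta_{a,b}\cong X^{n-1}$ of complex codimension $m$, the higher direct image sheaves are governed by the local cohomology along this diagonal arrangement: the link of a single diagonal is a sphere $S^{2m-1}$, contributing one Gysin generator $G_{a,b}$ of degree $2m-1$, while the intersection pattern of the diagonals produces the product structure. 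As an algebra the resulting page is $H^{\ast}(X^{n},\mathbb{Q})\otimes\Lambda(G_{a,b})$ modulo exactly the stated relations: the symmetry $G_{a,b}=G_{b,a}$ reflects that $\Delta_{a,b}$ is unordered; the relation $p_{a}^{\ast}(x)G_{a,b}=p_{b}^{\ast}(x)G_{a,b}$ records that the $a$-th and $b$-th projections agree along $\Delta_{a,b}$; and the three-term Arnold relation $G_{a,b}G_{b,c}+G_{b,c}G_{c,a}+G_{c,a}G_{a,b}=0$ comes from the way three diagonals meet, as in the cohomology of the classical braid arrangement.

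Second I would identify the differential. The class $G_{a,b}$ transgresses to the Thom/Gysin pushforward of the fundamental class of $\Delta_{a,b}$, which in $H^{\ast}(X^{n})$ is precisely the diagonal class $p_{a,b}^{\ast}\Delta$; this gives $d(G_{a,b})=p_{a,b}^{\ast}\Delta$. Using the Poincar\'{e}-duality presentation $\Delta=\sum_{i}(-1)^{|e_{i}|}p_{a}^{\ast}(e_{i})\,p_{b}^{\ast}(e_{i}^{\vee})$, valid because $X$ is a closed oriented $2m$-manifold, I would check that $d$ is compatible with the three relations, in particular that $d$ applied to the module relation and to the Arnold relation is consistent; this is a direct, if somewhat tedious, verification that the presented CDGA is well defined.

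Third comes the degeneration, which is the crux. Here I would invoke Deligne's mixed Hodge theory: the diagonal classes and the Gysin generators $G_{a,b}$ carry Hodge structures of prescribed weight, and the Leray spectral sequence is a spectral sequence of mixed Hodge structures. A weight count then forces all differentials beyond the transgression $d(G_{a,b})=p_{a,b}^{\ast}\Delta$ to vanish, so the page $(E(n),d)$ already equals $E_{\infty}$; the weight grading simultaneously splits the associated graded, yielding $H^{\ast}(E(n),d)\cong H^{\ast}(F(X,n);\mathbb{Q})$ as graded algebras. This step is where smoothness and projectivity of $X$ are essential.

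Finally, to pass from this cohomology computation to the assertion that $E(n)$ is a rational \emph{model}, I would use that a compact K\"{a}hler (hence smooth projective) manifold is formal; formality is inherited by $X^{n}$ and, through the description of the spectral sequence by algebraic cycles, by the whole construction, producing a zig-zag of CDGA quasi-isomorphisms between $E(n)$ and $A_{PL}(F(X,n);\mathbb{Q})$. An alternative route is induction on $n$ via the Fadell--Neuwirth fibration $F(X,n)\to F(X,n-1)$ with fibre $X\smallsetminus\{n-1\text{ points}\}$, building the total-space model as a relative Sullivan extension whose new generators $G_{n,b}$ link the deleted points. The main obstacle is the degeneration together with this formality upgrade: verifying that no higher differentials survive demands careful use of the functoriality of mixed Hodge structures on the Leray spectral sequence, and upgrading an isomorphism on cohomology to a genuine quasi-isomorphism of CDGAs requires the formality input rather than a mere dimension count.
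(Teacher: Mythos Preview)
The paper does not prove this theorem at all: it is quoted from Kriz's work as background, and the paper's own contribution begins afterwards by generalizing $E(n)$ to $R_n(A,\Gamma)$ and comparing it with $\cbs{\Gamma}^*$. So there is no ``paper's own proof'' to compare your proposal against.

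That said, your outline is essentially the Kriz--Totaro argument as it appears in the original sources: identify the $E_2$-page of the Leray spectral sequence of $j\colon F(X,n)\hookrightarrow X^n$ with the stated CDGA, read off $d(G_{a,b})=p_{a,b}^*\Delta$ as the Gysin transgression, and use mixed Hodge theory to force degeneration. The formality upgrade you mention is also the standard route. As a sketch it is fine; just be aware that in the present paper the theorem is treated as a black box, so no proof is expected here.
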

	
	\goodbreak
	
	\begin{remark}
		The third relation $G_{a,b}G_{b,c}+G_{b,c}G_{c,a}+G_{c,a}G_{a,b}=0$
		is known in the literature as \emph{Arnold relation}.
	\end{remark}
	
	The definition of this graded algebra presents some similarities with
	the graded complex defined in \emph{Section }\ref{BS}: the structure
	of the exterior algebra with generators $G_{a,b}$ and the first two
	relations. However, the differential in $\mathcal{C}_{BS}(\Gamma)$
	"adds edges" while the one in $E[n]$ "removes edges". Therefore, we
	would like to relate the dual of the graded complex
	$\mathcal{C}_{BS}(\Gamma)$ with the DGA $E(n)$.

	Moreover, the complex $\mathcal{C}_{BS}(\Gamma)$ makes perfect sense
	in positive characteristic, so that we will also consider the
	following situation. Let $\kk$ be a ground ring, which could typically
	be $\ZZ$, $\QQ$, or a prime field $\field_p$. Assume that
	$A=H^*(X,\kk)$ is an algebra over $\kk$ which is free as a
	$\kk$-module. We extend the definition given by Kriz to this case by
	defining $E[n]$ as $A[G_{a,b}]/\sim$, where the relations are given by
	exactly the same three formulas as in the theorem above. It will be
	convenient to extend the definition further to the case where $A$ is a
	Frobenius algebra. To do this, we have to give a definition of
	$\Delta$ in this case, we do that in the next section.
	\section{Structures of tensor powers of Frobenius algebras}\label{frob}
	We will consider a graded version of Frobenius algebras. To be precise
	about how we understand that term in this chapter:
	\begin{definizione}
		A graded commutative Frobenius algebra $A$ over a commutative ground
		ring $\kk$ is a graded commutative ring, free and finite over
		$\kk=A^0$ as a module, together with a perfect, graded symmetrical
		pairing
		\begin{equation*}
		\innerproduct -- : A\otimes A\to \kk,
		\end{equation*}
		such that
		\begin{equation*}
		\innerproduct{ab}c=\innerproduct{a}{bc}.
		\end{equation*}
		
	\end{definizione}
	
	\begin{remark}
		Main example: Let $X$ be a compact, connected $\kk$-orientated
		manifold such that each cohomology group $H^i(X;\kk)$ is a free
		$\kk$-module. The cohomology ring $H^*(X,\kk)$ is a graded
		commutative Frobenius algebra over $\kk$. In this case, the pairing
		has degree $-\dim(X)$.
	\end{remark}
	
	If $A$ is a graded Frobenius algebra, so is $A\otimes A$. The
	multiplication is given by the usual tensor product of DGAs, involving
	the Koszul sign
	\begin{equation*}
	(a\otimes b)(c\otimes d)=(-1)^{|b|\cdot |c|}
	ac\otimes bd,
	\end{equation*}
	and the pairing is given by
	\begin{equation*}
	\innerproduct {a\otimes b}{c\otimes
		d}_2=(-1)^{|b|\cdot |c|} \innerproduct
	ac\innerproduct bd
	\end{equation*}
	where $|-|$ stands for the degree of the an element in the graded
	algebra.
	
	We can construct the dual $A^{\ast}=\hom(A,\kk)$ and we have an
	isomorphism of vector spaces given by
	\begin{equation*}
	k:A\cong A^{\ast}
	\end{equation*}
	\begin{equation*}
	a\mapsto  k(a)(-)=\innerproduct -a
	\end{equation*}
	$A$ is equipped with a multiplication $m:A\otimes A\rightarrow A$ and
	$A^{\ast}$ a dual map given by
	$m^{\ast}:A^{\ast}\rightarrow (A\otimes A)^{\ast}\cong A^{\ast}\otimes
	A^{\ast}$. Therefore we have a map $\mu^*: A\rightarrow A\otimes A$
	defined by composing the map $k$ that gives the isomorphism with the
	dual:
	\begin{equation*}
	\mu^*:A\xrightarrow{k}A^{\ast}\xrightarrow{m^*} A^{\ast}\otimes A^{\ast}\xrightarrow{k^{-1}\otimes k^{-1}} A\otimes A.
	\end{equation*}
	
	Alternatively, $\mu^*$ is defined by that
	\begin{equation*}
	\innerproduct {x\otimes y}{\mu^*(a)}_2=\innerproduct
	{xy}{a}
	\end{equation*}
	
	We see from this definition that $\mu^*:A\to A\otimes A$ is an
	$A\otimes A^{op}$ module map, since
	\begin{align*}
	\MoveEqLeft \innerproduct {x\otimes y}{(a\otimes 1)\mu^*(b)(1\otimes
		c)}_2
	\\
	&=
	(-1)^{|a|\cdot |xy|+|b|\cdot |c|}\innerproduct {(a\otimes 1)(x\otimes y)(1\otimes c)}{\mu^*(b)}_2\\
	&=(-1)^{|a|\cdot |xy|+|b|\cdot |c|}\innerproduct {axyc}{b}\\
	&=\innerproduct {xy}{abc}.
	\end{align*}

	We define $\Delta\in A\otimes A$ by the property
	\begin{equation*}
	\innerproduct{a\otimes
		b}\Delta_2=\innerproduct{ab}1.
	\end{equation*}
	
	\begin{remark}
		In the case $A=H^*(M,\kk)$ as considered above,
		$A\otimes A\cong H^*(M\times M,\kk)$, and $\Delta$ corresponds under
		this isomorphism to the Poincar\'e dual of the homology class of the
		diagonal $M\subset M\times M$.
	\end{remark}

	\begin{lemma}
		\label{lemma.delta}
		The class $\Delta$ satisfies that $(a\otimes b)\Delta=\mu^*(ab)$. In
		particular, $\mu^*: A\rightarrow A\otimes A$ is given by
		$\mu^*(a)=(a\otimes 1)\Delta$.
	\end{lemma}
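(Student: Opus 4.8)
The plan is to prove the identity $(a\otimes b)\Delta=\mu^*(ab)$ by testing both sides against the pairing $\innerproduct{-}{-}_2$. Since $A\otimes A$ is generated as a $\kk$-module by elementary tensors $x\otimes y$, and the pairing is $\kk$-bilinear and (being the pairing of the Frobenius algebra $A\otimes A$) perfect, it suffices to check that $\innerproduct{x\otimes y}{(a\otimes b)\Delta}_2=\innerproduct{x\otimes y}{\mu^*(ab)}_2$ for all homogeneous $x,y\in A$.

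For the left-hand side I would use that $A\otimes A$ is again a graded commutative Frobenius algebra, so that the adjunction $\innerproduct{uv}{w}_2=\innerproduct{u}{vw}_2$ holds, together with the explicit product formula on $A\otimes A$ and the defining property of $\Delta$:
\[
\innerproduct{x\otimes y}{(a\otimes b)\Delta}_2=\innerproduct{(x\otimes y)(a\otimes b)}{\Delta}_2=(-1)^{\deg y\deg a}\innerproduct{(xa)\otimes(yb)}{\Delta}_2=(-1)^{\deg y\deg a}\innerproduct{xayb}{1}.
\]
For the right-hand side the definition of $\mu^*$ gives $\innerproduct{x\otimes y}{\mu^*(ab)}_2=\innerproduct{xy}{ab}$, and then the Frobenius adjunction on $A$ together with graded commutativity (commuting $y$ past $a$ inside $A$) yields
\[
\innerproduct{xy}{ab}=\innerproduct{xyab}{1}=(-1)^{\deg y\deg a}\innerproduct{xayb}{1}.
\]
The two expressions coincide, so perfectness of $\innerproduct{-}{-}_2$ forces $(a\otimes b)\Delta=\mu^*(ab)$. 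Specializing to $b=1$ then gives $\mu^*(a)=\mu^*(a\cdot 1)=(a\otimes 1)\Delta$, which is the second assertion.

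I do not expect a genuine obstacle here: the argument is a two-line computation once one is willing to invoke that $A\otimes A$ is a Frobenius algebra, which makes both the perfectness of $\innerproduct{-}{-}_2$ and the adjunction $\innerproduct{uv}{w}_2=\innerproduct{u}{vw}_2$ available (alternatively, both can be verified directly from the explicit formulas for the product and the pairing on $A\otimes A$). The only point that needs care is the Koszul sign bookkeeping: one must confirm that the sign $(-1)^{\deg y\deg a}$ produced by reordering the factors in the product $(x\otimes y)(a\otimes b)$ on $A\otimes A$ is exactly the sign produced by commuting $y$ past $a$ inside $A$, so that it appears symmetrically and matches on the two sides.
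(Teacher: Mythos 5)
Your proposal is correct and takes essentially the same route as the paper's own proof: both test the identity against the perfect pairing $\innerproduct{-}{-}_2$ on elementary tensors, use the Frobenius adjunction on $A\otimes A$ together with the defining property of $\Delta$ on one side and the defining property of $\mu^*$ on the other, and then specialize to $b=1$. The only difference is that you carry out the Koszul sign bookkeeping explicitly (checking that $(-1)^{\deg y\deg a}$ appears on both sides), whereas the paper's computation silently absorbs this sign via graded commutativity.
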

	\begin{proof}
		Because the paring $\innerproduct -- _2$ is perfect, it suffices to
		prove that for any $x,y\in A$, we have that
		$\innerproduct{x\otimes y}{\mu^*(ab)}_2=\innerproduct {x\otimes
			y}{(a\otimes b)\Delta}_2$. We do the computation
		\begin{align*}
		\innerproduct{x\otimes y}{(a\otimes b)\Delta}_2&=
		\innerproduct{(x\otimes y)(a\otimes b)}{\Delta}_2\\
		&=\innerproduct{xyab}1\\
		&=\innerproduct {xy}{ab}\\
		&=\innerproduct{x\otimes y}{\mu^*(ab)}_2.
		\end{align*}
	\end{proof}
	
	\begin{remark}
		$\Delta$ has the property that
		$(1\otimes a)\Delta= (a\otimes 1)\Delta$, $a\in A$.
	\end{remark}

	We introduce some notation.  Let $S$ be a subset of the set of edges
	$E(\Gamma)$. Each $S$ determines a partition of the set of vertices so
	we have a map
	\begin{equation*}
	\Phi:\edges \Gamma\rightarrow\partitions \Gamma
	\end{equation*}
	where $\partitions \Gamma$ is the set of all partitions of $V(\Gamma)$
	and $\edges{E}$ the set of subsets of $E(\Gamma)$. The sets
	$\edges \Gamma$ form a partially ordered set by reversed  inclusion, and
	$\partitions \Gamma$ form partially ordered sets by refinement. The
	map $\Phi$ is order preserving.
	
	% The set of subsets of $E(\Gamma)$ forming the partition $P$ is
	% denoted by $C(P)$. We can think of $C$ as a contravariant functor
	% from $\partitions \Gamma$ to finite sets.
	
	Note that the number $l(S)$ introduced in \emph{Definition }\ref{BS} corresponds to the number of sets in the partition $P=\Phi(S)$,
	that is the cardinality of $\Phi(S)$. We denote also by $|P|$ the
	number $l(S)$.
	
	There is a contravariant functor $\Psi$ from $\partitions \Gamma$ to
	graded algebras given by
	\begin{equation*}
	\Psi(P)=A^{\otimes |P| }.
	\end{equation*}
	
	The dual of the canonical surjective map
	$\Psi(P\to V(\Gamma)):A^{\otimes n} \to A^{\otimes |P|}$ is a
	canonical injective map
	\begin{equation*}
	A^{\otimes |P|}\cong (A^{\otimes
		|P|})^*\hookrightarrow (A^{\otimes n})^*\cong
	A^{\otimes n}.
	\end{equation*}
	
	For any partition $P$ we consider the image
	$\Delta_P\in A^{\otimes n}$ of $1\in A^{\otimes |P|}$. Using
	lemma~\ref{lemma.delta} inductively, we see that multiplying with this
	element is dual to the multiplication map in the sense that the
	following diagram commutes:
	\begin{equation*}
	\begin{tikzcd}[column sep=huge]
	A^{\otimes |P|}\ar[d,"k^{\otimes |P|}"]\ar[r,"\Delta_P\cdot"]&  A^{\otimes n}\ar[d,"k^{\otimes n}"]\\
	(A^{\otimes |P|})^*\ar[r,"\Psi(P\to V(\Gamma))^*"]& (A^{\otimes
		n})^*
	\end{tikzcd}
	\end{equation*}
	
	This element is invariant under any permutation in $S_n$ preserving
	$P$.  If $Q$ is a refinement of $P$, there is similarly a relative
	element $\Delta_{Q,P}\in A^{\otimes |Q|}$ such that the following
	diagram commutes:
	\begin{equation*}
	\begin{tikzcd}
	A^{\otimes |Q|}\ar[dr,"\Delta_Q"']\ar[rr,"\Delta_{Q,P}"]&  &A^{\otimes |P|}\ar[dl,"\Delta_P"]\\
	&A^{\otimes n}&
	\end{tikzcd}
	\end{equation*}
	Each algebra $A^{\otimes |P|}$ is a module over $A^{\otimes n}$, and
	multiplication by
	$\Delta_{P,Q}$ is a map of $A^{\otimes n}$-modules.
	
	% We can now give a definition of the generalized version of the DGA
	% $E[n]$,, that depends on a graph $\Gamma$ not necessary complete and
	% a Frobenius algebra $A$.
	% \begin{definizione}\label{complex R}
	%   Let $M$ be an even dimensional, compact, connected
	%   $\kk$-orientated manifold, $A=H^{\ast}(M,\kk)$ be a Frobenius
	%   algebra, where $\kk$ is the ground ring. $\Gamma$ a graph with $n$
	%   edges, we define the complex
	% $$R_n(A,\Gamma)=\Lambda[G_{a,b}]\otimes A^{\otimes
	% n}/\sim$$ where $\sim$ are the relations introduced in \emph{Theorem
	% }\ref{kriz} and $\Lambda[G_{a,b}]$ is the exterior algebra with
	%   generators given by the edges in $\Gamma$. The differential is
	%   given by $$d(G_{a,b})=p_{a,b}^{\ast}\Delta$$ where $\Delta$ is
	%   defined above.
	%	
	% \end{definizione}
	\section{The dual graded complex}\label{dual}

	Using the notation of the previous section, we can re-write
	$\cbs \Gamma$ as the graded chain complex
	
	\begin{equation*}
	\cbs \Gamma=\bigoplus_{S\subseteq E(\Gamma) }e_S\otimes A^{\otimes l(S)}= \bigoplus_{P\in\mathcal{P}}(\bigoplus_{S, \phi(S)=P}e_{S}\otimes A^{\otimes |P|}).
	\end{equation*}
	
	The differential is given by
	\begin{align*}
	\partial&=\sum_{e\in E(\Gamma)}\partial_e\\
	\partial_e(e_S\otimes x)&=(-1)^\tau e_{S\cup \{e\}}\otimes \Psi(\Phi(
	S\cup\{e\})).
	\end{align*}
	The sign $(-1)^\tau$ is determined by the number $\tau$ of edges in
	$S$ that precede $e$ in the chosen ordering of the edges.
	
	We note that as a graded vector space
	\begin{equation*}
	\mathcal{C}_{BS}(\Gamma)=\Lambda(e_{\alpha})\otimes (A^{\otimes n})/{\sim}
	\end{equation*}
	where $\sim$ indicates the relation $e_{\alpha}\otimes (a[i]-a[j])$, $a\in A$, $\alpha:i\rightarrow j\in E(\Gamma)$ and $a[i]$
	denotes the element
	$1^{\otimes i-1}\otimes a\otimes 1^{\otimes n-i}\in A^{\otimes n}$,
	described in the first section in \emph{Definition } \ref{def Cbs}.
	This relation corresponds to the second relation of the definition of
	the DGA defined by \emph{Kriz}, since
	$p_{a}(x)=1\otimes\dots\otimes x\otimes\dots\otimes 1\in A^{\otimes
		n}$ where $x$ is the $a$-th component of the tensor product.
	
	We want to describe the dual graded chain complex
	\begin{equation*}
	\mathcal{C}^\ast_{BS}(\Gamma)=\Bigl(\bigoplus_{S\subseteq E(\Gamma) }e_{S}\otimes A^{\otimes l(S)}\Bigr)^{\ast}.
	\end{equation*}

	We will denote by $G_{\alpha}$ the basis for the dual exterior algebra over  $e_{\alpha}$,
	for the edge $\alpha: i\rightarrow j$.
	We can so write the dual graded chain complex
	\begin{salign*}
	\mathcal{C}^\ast_{BS}( \Gamma)&=\Bigl(\bigoplus_{S\subseteq
			E(\Gamma)}e_{S}\otimes A^{\otimes l(S)}\Bigr)^{\ast}
		\\
		&=\bigoplus_{S\subseteq E(\Gamma)}(e_{S})^{\ast}\otimes
		(A^{\ast})^{\otimes l(S)}=\bigoplus_{S\subseteq
			E(\Gamma)}G_{S}\otimes A^{\otimes l(S)},
	\end{salign*}
where $G_S$ denotes the product of all the $G_{ij}$
where $\alpha: i\rightarrow j$ is an edge in $S$.
	% where $G_{S}$ denotes $(e_{S})^{\ast}$. In general we will denote by
	% $G_{i,j}$ the dual of the element $e_{\alpha}$, for the edge
	% $\alpha: i\rightarrow j$.
%	Equivalently
%	\begin{equation*}
%	\mathcal{C}^\ast_{BS}( \Gamma)=\Lambda(G_{\alpha})\otimes A^{\otimes n}/{\sim}\,.
%	\end{equation*} 

	% and for every $k\geq 1$
	% $$(\mathcal{C}_{BS}(\Gamma))^{k}=\bigoplus_{S\subseteq
	% E(\Gamma), |S|=k}G_{S}\otimes A^{\otimes l(S)}$$ here $\sim$
	% indicates the relation $e_{\alpha}\otimes (a[i]-a[j])$, where
	% $a\in A$, $\alpha:i\rightarrow j\in E$ and $a[i]$ denotes the
	% element
	% $1^{\otimes i-1}\otimes a\otimes 1^{\otimes n-i}\in A^{\otimes n}$,
	% described in the first section in \emph{Definition } \ref{def Cbs}.
	% This relation correspond to the second relation of the definition of
	% the DGA defined by \emph{Kriz}, since
	% $p_{a}(x)=1\otimes\dots\otimes x\otimes\dots\otimes 1\in A^{\otimes
	% n}$ where $x$ is the $a$-th component of
	% the tensor product. \\
	The dual of the differential $\partial$, that we denote by $\delta$,
	acts by removing edges in the graph and therefore increasing the
	number of components. Let $G_ {S}$ be the product of all the $G_{ij}$
	where $\alpha: i\rightarrow j$ is an edge in $S$,
	\begin{equation*}
	\delta(G_ {S})=\sum _{i<j}(-1)^{\nu}\delta_{i,j}(G_ {S})=\sum_{i<j}(-1)^{\nu} G_{S\smallsetminus \alpha_{i,j}}
	\end{equation*}
	where $\nu$ is the number corresponding to the position of the edge
	$\alpha_{i,j}$ in the ascending order. We have
	\begin{equation*}
	\delta(G_ {S}\otimes a_{1}\otimes\dots\otimes a_{l(S)})=\sum _{i<j}(-1)^{\nu}\delta_{i,j}(G_ {S}\otimes a_{1}\otimes\dots\otimes a_{l(S)})
	\end{equation*}
	and
	\begin{salign*}
		\MoveEqLeft \delta_{i,j}(G_ {S}\otimes a_{1}\otimes\dots\otimes
		a_{l(S)})
		\\
		&=(-1)^{\tau} G_ {S\smallsetminus
			\alpha}\otimes(\Delta_{S,S\smallsetminus\alpha} \cdot a_{1}\otimes
		\dots\otimes a_{l(S)})
	\end{salign*}
	in the case $\alpha: i\rightarrow j$ is an edge belonging to $S$ and
	$l(S\smallsetminus\alpha)=l(S)-1$, and $\tau$ is the Kozul sign given by
	moving the factor in $\mu(a)$ in the $j$-th position. While
	\begin{equation*}
	\delta_{i,j}(G_ {S}\otimes a_{1}\otimes\dots\otimes a_{l(S)})=G_ {S\smallsetminus \alpha}\otimes a_{1}\otimes\dots\otimes a_{l(S)}
	\end{equation*}
	in the case $\alpha: i\rightarrow j$ is an edge belonging to $S$ and
	$l(S\smallsetminus\alpha)=l(S)$. Finally,
	\begin{equation*}
	\delta_{ij}(G_ {S}\otimes a_{1}\otimes\dots\otimes a_{l(S)})=0
	\end{equation*}
	if $\alpha$ does not belong to $S$.
	\begin{remark}
		We discuss here the grading of the dual complex. Let
		$S\subseteq E(\Gamma)$. We assign to an element
		$G_{S}\otimes a_{1}\otimes\dots\otimes a_{l(S)}$ in
		$\cbs{\Gamma}^{\ast}$ the grading
		\begin{equation*}
		(m-1)r_{\ext}-r_{\mathrm{int}}+\sum_{i}|a_{i}|,
		\end{equation*}
		where $m$ is the dimension of the manifold. $r_{\ext}$ is the number
		of external edges, that are the edges that, if removed, disconnect
		components, $r_{\mathrm{int}}$ the number of internal edges, that are the
		edges that do not disconnect components if removed and $|a_{i}|$ is
		the degree of the element $a_{i}$ in $A$. The differential has
		degree $1$ since
		\begin{salign*}
			\MoveEqLeft[0] \delta(G_{S}\otimes a_{1}\otimes\dots\otimes
			a_{l(S)})
			\\
			&=
			\begin{cases}
				0 & \text{if}\ \alpha\notin S\\
				\sum_{\alpha\in E(\Gamma)}G_{S\smallsetminus\alpha}\otimes\Delta_{S,S\smallsetminus\alpha}\cdot a_{1}\otimes\dots\otimes a_{l(S)}, & \text{if}\ \alpha \text{ disconnects}\ S \\
				\sum_{\alpha\in E(\Gamma)}G_{S\smallsetminus\alpha}\otimes
				a_{1}\otimes\dots\otimes a_{l(S)}, & \text{if}\ \alpha \text{
					non disconnects}\ S
			\end{cases}
		\end{salign*}
		and $\Delta_{S,S\smallsetminus\alpha}$ has degree $m$.  If $S$ is a
		forest, the grading of $\cbs{\Gamma}^{\ast}$ and the DGA
		$R(\Gamma, A)$ that we define later, coincide.
	\end{remark}
	\section{The ring $R(K_{n})$}\label{ring}
	In this section we want to study the ring defined by the exterior
	algebra $\Lambda[G_{a,b}]$, where $G_{a,b}$ are edges in a complete
	graph with $n$ vertices $K_n$, quotient by the relations introduced by
	Kriz in \emph{Theorem }\ref{kriz}.
	
	Let $K_n$ be a complete graph with $n$ vertices and $\Lambda[G_{a,b}]$
	be the exterior algebra with generators $G_{a,b}$ given corresponding
	to the edges in $K_n$. We define
	\begin{equation*}
	R(K_n)=\Lambda[G_{a,b}]/{\sim}
	\end{equation*}
	where $\sim$ is the Arnold relation
	$G_{i,j}G_{j,k}+G_{j,k}G_{k,i}+G_{k,i}G_{i,j}$. We call $I(K_n)$ the
	ideal generated by this relation, in order to simplify the notation
	this will be denoted also by $I$.  
	
	The following lemmas characterize
	the ideal $I$. We denote by $G_{\Gamma}$ the product of the generators
	corresponding to edges in $\Gamma$.
	\begin{lemma}\label{cycle}
		Denote by $v=(v_{1},\dots,v_{k})$, $k\geqslant3$ a set of vertices in
		$K_n$ and denote by $s(v)$ the product
		$s(v)=G_{v_{1},v_{2}}\cdot\ldots\cdot G_{v_{i},v_{i+1}}\cdot
		G_{v_{k},v_{1}}$ where $G_{v_{i},v_{i+1}}$ is the generator in the
		exterior algebra corresponding to the edge
		$\alpha:v_{i}\rightarrow v_{i+1}$, so $s(v)$ is the product of the
		generators corresponding to edges of a cycle of length $k$. Let $J$
		be the ideal generated by the elements $s(v)$ with
		$k\geqslant3$. Let $I$ be the ideal generated by $\delta(s(v))$ for every $v$ with
		$k\geqslant3$. Then $J$ is contained in $I$ and $I$ is generated by
		$\delta(s(v_{1},v_{2},v_{3}))=G_{v_{1},v_{2}}G_{v_{2},v_{3}}+G_{v_{2},v_{3}}G_{v_{3},v_{1}}+G_{v_{3},v_{1}}G_{v_{1},v_{2}}$.
		% let
		% $s=G_{1,2}\cdot\ldots\cdot G_{i,i+1}\cdot
		% G_{i+1,i+2}\cdot\ldots\cdot G_{k,1}$, the product
		% of
		% edges forming a cycle of length $k$. Then
		% $\delta(s)$ and $s$ belongs to $I$.
	\end{lemma}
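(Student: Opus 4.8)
The plan is to compare three ideals of $\Lambda[G_{a,b}]$: the ideal $I_0$ generated by the elements $\delta(s(v))$ for all cycles $v$ with $k\geq 3$; the ideal $I_1$ generated by the triangle elements $\delta(s(v_1,v_2,v_3))$, each of which one verifies by a routine sign computation to equal, up to an overall sign and a reordering of its three factors, the Arnold relation, so that $I_1$ is exactly the ideal $I$ occurring in the definition of $R_n(\Gamma)$; and the ideal $J$ generated by the cycle products $s(v)$, $k\geq 3$. Two of the required containments come for free. First, $I_1\subseteq I_0$ since triangles are cycles. Second, $J\subseteq I_0$, because for any edge $\alpha$ of a cycle $v$ one has the exterior-algebra identity
\[
s(v)=\pm\, G_\alpha\cdot\delta\bigl(s(v)\bigr):
\]
expanding $\delta(s(v))$ as the signed sum of the products obtained from $s(v)$ by deleting one factor, every summand that still contains $G_\alpha$ is annihilated on multiplication by $G_\alpha$ because $G_\alpha^2=0$, and the unique remaining summand returns $\pm s(v)$ after reordering; hence $s(v)\in I_0$. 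So the whole content of the lemma reduces to the inclusion $I_0\subseteq I_1$.

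I would prove $I_0\subseteq I_1$ by induction on the length $k$ of the cycle, the case $k=3$ being vacuous. For $k\geq 4$, write $C=(v_1,\dots,v_k)$, adjoin the \emph{chord} generator $G_{v_1,v_3}$, and split $C$ into the triangle $T=(v_1,v_2,v_3)$ and the $(k-1)$-cycle $C'=(v_1,v_3,v_4,\dots,v_k)$. Letting $W$ be the product of the generators attached to the edges common to $C$ and $C'$ (namely $v_3v_4,v_4v_5,\dots,v_kv_1$), a bookkeeping computation in the exterior algebra gives
\[
\delta\bigl(s(T)\bigr)\cdot W=\pm\, s(C)\;\pm\; G_{v_1,v_2}\,s(C')\;\pm\; G_{v_2,v_3}\,s(C'),
\]
the last two terms being a pendant generator times the $(k-1)$-cycle product. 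Now apply $\delta$ to both sides, using that it is a signed derivation with $\delta^2=0$ and $\delta(G_\beta)=\pm 1$ on a single generator. The left-hand side becomes $\pm\,\delta(s(T))\cdot\delta(W)$, which lies in $I_1$ because $\delta(s(T))$ is up to sign the generating Arnold element and $I_1$ is an ideal. On the right, $\delta\bigl(G_{v_1,v_2}\,s(C')\bigr)=\pm\, s(C')\pm G_{v_1,v_2}\,\delta(s(C'))$; here $\delta(s(C'))\in I_1$ by the inductive hypothesis, and then $s(C')=\pm\,G_{v_1,v_3}\,\delta(s(C'))\in I_1$ by the identity of the first paragraph, so this term lies in $I_1$, and the $G_{v_2,v_3}$ term is handled identically. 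Rearranging leaves $\pm\,\delta(s(C))$ as the only term not yet known to be in $I_1$, so $\delta(s(C))\in I_1$ and the induction closes.

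Putting it together, the induction gives $\delta(s(v))\in I_1$ for every cycle, hence $I_0\subseteq I_1$ and therefore $I_0=I_1=I$; and $J\subseteq I$ follows from $s(v)=\pm G_\alpha\,\delta(s(v))$. The main obstacle I anticipate is the splitting identity of the inductive step together with the derivation formula for $\delta$: one must keep track consistently of the position signs $(-1)^\nu$ from the chosen order on the edges, the Koszul signs from permuting the odd-degree generators $G_{a,b}$, and the sign in $\delta(xy)=\delta(x)y\pm x\,\delta(y)$ — although, reassuringly, none of the individual signs matters, since the conclusion only asserts equalities and inclusions of ideals. A secondary point is that the chord $G_{v_1,v_3}$ used to decompose the cycle must be available as a generator; this is automatic when one works in the exterior algebra on all generators $G_{a,b}$ with $a\neq b$, which is the setting underlying Kriz's model, and for a general graph one restricts attention to cycles that can be triangulated inside $\Gamma$.
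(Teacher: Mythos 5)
Your proof is correct and follows essentially the same route as the paper's: both establish $J\subseteq I$ via the identity $s(v)=\pm G_\alpha\cdot\delta(s(v))$, and both prove that $I$ is generated by the triangle elements by inserting a chord to split a $k$-cycle into a triangle $T$ and a $(k-1)$-cycle $C'$, then extracting $\delta(s(C))$ from the product of $\delta(s(T))$ with (the boundary of) the complementary path and inducting on $k$. Your bookkeeping — differentiating the product identity $\delta(s(T))\cdot W=\pm s(C)\pm\cdots$ rather than expanding $\delta(s(T))\cdot\delta(W)$ term by term, and absorbing the leftover $s(C')$ terms into the ideal via the first identity instead of cancelling them against each other — is a somewhat cleaner and less sign-sensitive presentation of the same computation, and your closing remark about the availability of the chord generator flags a hypothesis that the paper's own argument uses silently.
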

	\begin{proof}
		We first show that $J$ is contained in $I$.
		\begin{equation}
		\begin{split}
		\MoveEqLeft[3] G_{v_{1},v_{2}}\cdot\delta(s(v))
		\\
		= {} &G_{v_{1},v_{2}}\cdot(G_{v_{2},v_{3}}\cdot\ldots\cdot G_{v_{i},v_{i+1}}\cdot G_{v_{i+1},v_{i+2}}\cdot\ldots\cdot G_{v_{k},v_{1}})\\
		& -G_{v_{1},v_{2}}\cdot(G_{v_{1},v_{2}}\cdot G_{v_{3},v_{4}}\cdot\ldots\cdot G_{v_{i},v_{i+1}}\cdot G_{v_{i+1},v_{i+2}}\cdot\ldots\cdot G_{v_{k},v_{1}})\\
		&+\cdots\\
		= {} &G_{v_{1},v_{2}}\cdot G_{v_{2},v_{3}}\cdot\ldots\cdot
		G_{v_{i},v_{i+1}}\cdot G_{v_{i+1},v_{i+2}}\cdot\ldots\cdot
		G_{v_{k},v_{1}}=s(v).
		\end{split}
		\end{equation}
		Now we want to show that $I$ is generated by
		$\delta(s(v_{1},v_{2},v_{3}))=G_{v_{1},v_{2}}G_{v_{2},v_{3}}+G_{v_{2},v_{3}}G_{v_{3},v_{1}}+G_{v_{3},v_{1}}G_{v_{1},v_{2}}$. Let
		$I_{k}$ the ideal generated by $\delta(s(v))$ where
		$v=(v_{1},\dots,v_{l})$, $l\leqslant k$.  $I=\cup I_{k}$, we want to
		show by induction that $I_{k}=I_{3}$, $k\geqslant 3$ where $I_{3}$
		is generated by $\delta(s(v_{1},v_{2},v_{3}))$. It is obviously true
		for $k=3$. Suppose it true for $k-1$, we want to show that for every
		$s(v_{1},\dots,v_{k})$, $\delta(s(v_{1},\dots,v_{k}))\in I_{k-1}$.
		Consider
		$X=\delta(s(v_{k},v_{1},v_{2}))\delta(G_{v_{2},v_{3}}\cdot\ldots\cdot
		G_{v_{k-1},v_{k}})\in I_{3}=I_{k-1}$, we can expand the
		expression
		\begin{AllowDisplayBreaks}
			\begin{align*}
			X ={}& (G_{v_{1},v_{2}}\cdot
			G_{v_{2},v_{k}}-G_{v_{k},v_{1}}\cdot
			G_{v_{2},v_{k}}+G_{v_{k},v_{1}}\cdot G_{v_{1},v_{2}})\cdot
			\\*
			&\cdot\Bigl(\,\sum_{2\leq j\leq
				k-1}(-1)^{j}G_{v_{2},v_{3}}\cdot\ldots\cdot
			\widehat{G}_{v_{j},v_{j+1}}\cdot\ldots\cdot
			G_{v_{k-1},v_{k}}\Bigr)
			\\
			= {} &(G_{v_{1},v_{2}})\cdot\Bigl(\,\sum_{2\leq j\leq
				k-1}(-1)^{j}G_{v_{k},v_{2}}\cdot
			G_{v_{2},v_{3}}\cdot\ldots\cdot
			\widehat{G}_{v_{j},v_{j+1}}\cdot\ldots\cdot
			G_{v_{k-1},v_{k}}\Bigr)
			\\*
			&-(G_{v_{k},v_{1}})\cdot\Bigl(\,\sum_{2\leq j\leq
				k-1}(-1)^{j}G_{v_{k},v_{2}}\cdot
			G_{v_{2},v_{3}}\cdot\ldots\cdot
			\widehat{G}_{v_{j},v_{j+1}}\cdot\ldots\cdot
			G_{v_{k-1},v_{k}}\Bigr)
			\\*
			& +\sum_{2\leq j\leq k-1}(-1)^{j}G_{v_{k},v_{1}}\cdot
			G_{v_{1},v_{2}}\cdot G_{v_{2},v_{3}}\cdot\ldots\cdot
			\widehat{G}_{v_{j},v_{j+1}}\cdot\ldots\cdot G_{v_{k-1},v_{k}}
			\\
			= {}
			&(G_{v_{1},v_{2}})\cdot
			\begin{aligned}[t]
			(&-\delta(s(v_{k},v_{2},\dots,v_{k-1}))
			\\
			&+G_{v_{2},v_{3}}\cdot\ldots\cdot
			G_{v_{j},v_{j+1}}\cdot\ldots\cdot G_{v_{k-1},v_{k}})
			\end{aligned}
			\\*
			&
			-(G_{v_{k},v_{1}})
			\begin{aligned}[t]
			\smash[b]{\bigl(}&-\delta(s(v_{k},v_{2},\dots,v_{k-1}))
			\\
			&+G_{v_{2},v_{3}}\cdot\ldots\cdot
			G_{v_{j},v_{j+1}}\cdot\ldots\cdot G_{v_{k-1},v_{k}}\smash[t]{\bigr)}
			\end{aligned}
			\\*
			&+\delta(s(v_{k},v_{1},\dots,v_{k-1}))-G_{v_{1},v_{2}}\cdot
			G_{v_{2},v_{3}}\cdot\ldots\cdot
			G_{v_{j},v_{j+1}}
			\\
			&+G_{v_{k},v_{1}}\cdot
			G_{v_{2},v_{3}}\cdot\ldots\cdot G_{v_{j},v_{j+1}}
			\\
			= {}
			& -G_{v_{1},v_{2}}\cdot\delta(s(v_{k},v_{2},\dots,v_{k-1}))
			\\*
			&+G_{v_{k},v_{1}}\delta(s(v_{k},v_{2},\dots,v_{k-1}))+\delta(s(v_{k},v_{1},\dots,v_{k-1})).
			\end{align*}
		\end{AllowDisplayBreaks}
		Note that the third equality comes from the fact that
		\begin{salign*}
			\MoveEqLeft \delta(s(v_{k},v_{2},\dots,v_{k-1}))
			\\
			&=\sum_{2\leq j\leq
				k}(-1)^{j-1}G_{v_{k},v_{2}}\cdot\ldots\cdot
			\widehat{G}_{v_{j},v_{j+1}}\cdot\ldots\cdot G_{v_{k-1},v_{k}}
		\end{salign*}
		that equals the first term in the sum in the expression apart from the missing
		term
		$G_{v_{2},v_{3}}\cdot\ldots\cdot G_{v_{j},v_{j+1}}\cdot\ldots\cdot
		G_{v_{k-1},v_{k}}$. 
		
		Now,
		$\delta(s(v_{k},v_{1},\dots,v_{k-1}))=(-1)^{k}\delta(s(v_{1},v_{2},\dots,v_{k}))$,
		so we can write
		\begin{salign*}
			\MoveEqLeft
			\delta(s(v_{1},v_{2},\dots,v_{k}))
			\\
			&=-G_{v_{1},v_{2}}\cdot\delta(s(v_{2},v_{3},\dots,v_{k}))+G_{v_{k},v_{1}}\delta(s(v_{2},v_{3},\dots,v_{k}))-X.
		\end{salign*}
		We can conclude that
		$\delta(s(v_{1},v_{2},\dots,v_{k})\in I_{k-1}=I_{3}$. This end the
		proof by induction, so $I_{k}=I_{3}$ for all $k$ and so $I=I_{3}$.
	\end{proof}
	\begin{corollario}
		If the graph $K_n$ contains a cycle, then $G_{\Gamma}\in I$.
	\end{corollario}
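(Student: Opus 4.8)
The plan is to exhibit $G_\Gamma$ as a multiple of a single cycle monomial and then quote Lemma~\ref{cycle}. Since $\Gamma$ contains a cycle, I would first fix distinct vertices $v=(v_1,\dots,v_k)$ with $k\geq 3$ such that the edge $\alpha_i$ joining $v_i$ and $v_{i+1}$ (indices mod $k$) lies in $\Gamma$ for each $i$, and set $s(v)=G_{v_1,v_2}\cdots G_{v_{k-1},v_k}\cdot G_{v_k,v_1}$. The $k$ generators occurring in $s(v)$ are pairwise distinct (a cycle of length $\geq 3$ has distinct edges) and all occur among the generators of $\Lambda[G_{a,b}]$, so after permuting the factors of $G_\Gamma$ so that those coming from the cycle appear first, in the cyclic order $v_1v_2,\dots,v_kv_1$ — at the cost of a sign, since the generators are odd — one can write $G_\Gamma=\pm\,s(v)\cdot m$, where $m$ is the product of the generators corresponding to the edges of $\Gamma$ not on the chosen cycle.

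Next I would show $s(v)\in I$. This is precisely the first computation in the proof of Lemma~\ref{cycle}: $G_{v_1,v_2}\cdot\delta(s(v))=s(v)$, because in the expansion of $G_{v_1,v_2}\cdot\delta(s(v))$ the only surviving term is the one in which $\delta$ deletes the factor $G_{v_1,v_2}$, every other term containing $G_{v_1,v_2}$ twice and hence vanishing by $G_{v_1,v_2}^2=0$. Since $\delta(s(v))$ is a generator of $I$ (here $k\geq 3$) and $I$ is an ideal, it follows that $s(v)=G_{v_1,v_2}\cdot\delta(s(v))\in I$; equivalently one may simply invoke the inclusion $J\subseteq I$ from Lemma~\ref{cycle}, $J$ being the ideal generated by the monomials $s(v)$.

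Finally, as $I$ is an ideal and $G_\Gamma=\pm\,s(v)\cdot m$ with $s(v)\in I$, we conclude $G_\Gamma\in I$. I do not expect a genuine obstacle here: the substantive content lies entirely in Lemma~\ref{cycle}, and the only mild care needed is the sign bookkeeping when reordering the factors of $G_\Gamma$ so the cycle edges come first — which is irrelevant for a membership statement — together with the (easy) remark that the edges of a cycle of length $\geq 3$ are pairwise distinct, so that $s(v)$ really is a subproduct of $G_\Gamma$. The argument is manifestly independent of the choice of cycle.
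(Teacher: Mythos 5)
Your argument is correct and is exactly the intended deduction: the paper states this corollary without a separate proof, relying on the inclusion $J\subseteq I$ established in Lemma~\ref{cycle}, and your decomposition $G_{\Gamma}=\pm\,s(v)\cdot m$ together with the ideal property of $I$ is the obvious way to complete that step. The sign and distinct-edge remarks are fine but, as you note, immaterial for a membership claim.
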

	%	\begin{proof}
	%   We first prove that if $\Gamma$ is a cycle, then $E(\Gamma)\in I$.
	%   A cycle of length 3 is of the form $r(i,j,k)\in I$. Assume
	%   inductively that every cycle of length $n-1$ is contained in
	%   $I$. Now consider the cycle $e(1,2)e(2,3)e(3,4)\cdots e(n,1)$,
	%   which we re-write as $e(1,2)e(2,3)Xe(n,1)$ where
	%   $X=e(3,4)e(4,5)e(5,6)\cdots e(n-1,n)$.  We compute
	%   \begin{align*}
	%     e(1,2)e(2,3)e(3,4)Xe(n,1)&=-(e(2,3)e(3,1)+e(3,1)e(1,2))Xe(n.1)
	%     \mod I\\
	%     &=(e(2,3)-e(1,2)) e(3,1)Xe(n,1)
	%   \end{align*}
	%   Since $e(3,1)Xe(n,1)$ is a cycle of length $n-1$, this proves the
	%   statement.  Finally, if $\Gamma$ containes a cycle $C$,
	%   $e(\Gamma)$ can be be written as a product of $e(C)$ with another
	%   monomial so that $E(\Gamma)\in I$.
	% \end{proof}
	
	We conclude that every element in $R_n(\Gamma)$ where $\Gamma= K_n$
	can be written as a linear combination of the classes $G_{\Gamma'}$
	where $\Gamma'$ are graphs which do not contain any cycles. Such a
	graph is a disjoint union of trees, that is, it is a forest. However,
	these classes are not linearly independent in $R_n(\Gamma)$. Let $F$ denote a forest in $\Gamma$. We can
	rewrite the complex $R_n(\Gamma)$ as
	% $$R_n=\bigoplus_{P\in\mathcal{P}}(\bigotimes
	% C(\Gamma_{i}))$$
	
	\begin{equation*}
	R_{n}(\Gamma)=\Lambda[G_{a,b}]/\sim=\bigoplus_{F}\bigotimes_{T\subset F}\mathbb{Z}[T]/{\sim}
	\end{equation*}
	where $\mathbb{Z}[T]$ is the free group generated by the trees.
	We have from a result by Vassilev \cite{Vassiliev} that $\mathbb{Z}[T]=\mathbb{Z}^{(n-1)!}$.

	\section{The generalised DGA}
	
	We want to extend the definition of $E[n]$ to a graded algebra dependent by any graph $\Gamma$ and
	defined on a Frobenius algebra over any ring.  In order to do so, we need to
	modify the ideal $I(K_n)$ and introduce the following definition:
	\begin{definizione}\label{R(gamma)}
		Let $\Gamma$ be a graph, and $(v_{1},\dots,v_{k})$ $k\geqslant3$ a
		set of vertices in $\Gamma$. We call a cycle $w$ a subset of the set
		of edges of $\Gamma$ of the form
		$\{(v_{1},v_{2}),\dots,(v_{i},v_{i+1}),\dots, (v_{k},v_{1})\}$. Let
		$\Lambda[G_{a,b}]$ be the exterior algebra with generators $G_{a,b}$
		corresponding to the edges $(a,b)$ in $\Gamma$. We denote by $G_{w}$
		the product
		$G_{w}=G_{v_{1},v_{2}}\cdot\ldots\cdot
		G_{v_{i},v_{i+1}}\cdot\ldots\cdot G_{v_{k},v_{1}}$.  We define
		\begin{equation*}
		R(\Gamma)=\Lambda[G_{a,b}]/{\sim}
		\end{equation*}
		where $\sim$ are the relations
		\begin{itemize}
			\item $G_{a,b}=G_{b,a}$
			\item $	\delta(G_{w})=\sum_{i}(-1)^{i}G_{v_{1},v_{2}}\cdot\ldots\cdot\hat{G}_{v_{i},v_{i+1}}\cdot\ldots\cdot G_{v_{k},v_{1}}=0$
		\end{itemize}
%		\begin{equation*}
%		\delta(G_{w_{j}})=\sum_{i}(-1)^{i}G_{v_{1,j},v_{2,j}}\cdot\ldots\cdot\hat{G}_{v_{i,j},v_{i+1,j}}\cdot\ldots\cdot G_{v_{k,j},v_{1,j}}=0
%		\end{equation*}
		for all the cycles $w$ in $\Gamma$.  We call
		\emph{generalised Arnold relations} the second set of relations and
		$I(\Gamma)$ the ideal generated by them.
	\end{definizione}
	\begin{remark}
		Note that by the results in the previous section, if $\Gamma=K_n$
		then $I(\Gamma)=I(K_n)$.
	\end{remark}
	\begin{lemma}\label{cycle general garph}
		If $v$ is a cycle in $\Gamma$ then $G_{v}\in I(\Gamma)$.
	\end{lemma}
	\begin{proof}
		Let $v$ be the cycle with edges
		$\{(v_{1},v_{2}),\dots,(v_{i},v_{i+1}),\dots, (v_{k},v_{1})\}$
		\begin{equation*}
		\begin{split}
		\MoveEqLeft[3]  G_{v_{1},v_{2}}\cdot d(G_{v})
		\\
		= {} &G_{v_{1},v_{2}}\cdot(G_{v_{2},v_{3}}\cdot\ldots\cdot G_{v_{i},v_{i+1}}\cdot G_{v_{i+1},v_{i+2}}\cdot\ldots\cdot G_{v_{k},v_{1}})\\
		& -G_{v_{1},v_{2}}\cdot(G_{v_{1},v_{2}}\cdot G_{v_{3},v_{4}}\cdot\ldots\cdot G_{v_{i},v_{i+1}}\cdot G_{v_{i+1},v_{i+2}}\cdot\ldots\cdot G_{v_{k},v_{1}})\\
		&+\cdots\\
		= {} &G_{v_{1},v_{2}}\cdot G_{v_{2},v_{3}}\cdot\ldots\cdot
		G_{v_{i},v_{i+1}}\cdot G_{v_{i+1},v_{i+2}}\cdot\ldots\cdot
		G_{v_{k},v_{1}}=G_{v}.
		\end{split}
		\end{equation*}
	\end{proof}
	\begin{corollario}
		If $\Gamma$ contains a cycle then $G_{\Gamma}\in I(\Gamma)$.
	\end{corollario}
	We can conclude that the elements in $R_{n}(\Gamma)$ are linear
	combinations of forests. We now define the generalized complex.
	\begin{definizione}\label{complex R}
		Let $M$ a compact, connected $\kk$-orientated manifold of even
		dimension $m$, $A=H^{\ast}(M,\kk)$ be a Frobenius algebra, where
		$\kk$ is the ground ring. Let $\Gamma$ be a graph with $n$ edges and
		$k$ cycles $w_{j}$ $j=0,\dots,k$.  We define the differential graded algebra
		\begin{equation*}
		R(\Gamma, A)=\Lambda[G_{a,b}]\otimes A^{\otimes n}/\sim
		\end{equation*}
		where $G_{a,b}$ are generators of degree $m-1$,
		$(a,b)\in E(\Gamma)$, and $\sim$ are the relations
		\begin{itemize}
			\item $G_{a,b}=G_{b,a}$
			\item $p_{a}^{\ast}(x)G_{a,b}=p_{b}^{\ast}(x)G_{a,b}$,
			$x\in H^{\ast}(X)$
			\item
			$\delta(G_{w_{j}})=\sum_{i}(-1)^{i}G_{v_{1,j},v_{2,j}}\cdot\ldots\cdot\hat{G}_{v_{i,j},v_{i+1,j}}\cdot\ldots\cdot
			G_{v_{h,j},v_{1,j}}=0$, for all $j=0,\dots,k$
		\end{itemize}
		The differential is given by
		\begin{equation*}
		d(G_{a,b})=p_{a,b}^{\ast}\Delta
		\end{equation*}
		here $\Delta$ is the class of the diagonal as described in
		\emph{Section }\ref{frob} and $p_{a,b}^\ast$ the pull back of the projection defined in \textit{Section }\ref{Kriz}.
	\end{definizione}

	\section{A quasi equivalence}\label{section quasi eq}
	
	Let $M$ be an even dimensional, compact, connected $\kk$-orientated
	manifold of dimension $m$, $A=H^{\ast}(M,\kk)$ a graded commutative
	Frobenius algebra and $\Gamma$ be any graph. Let
	$(\mathcal{C}^\ast_{BS}( \Gamma),\delta) $ be the dual complex defined in \emph{Section
	}\ref{dual} as
	$\mathcal{C}^\ast_{BS}( \Gamma)=\bigoplus_{S\subseteq E(\Gamma)}G_{S}\otimes A^{\otimes
		l(S)}$. We consider the generalized complex given in
	\emph{Definition }\ref{complex R}
	\begin{equation*}
	R(\Gamma, A)=\Lambda[G_{a,b}]\otimes A^{\otimes n}/{\sim}
	\end{equation*}
	where $\sim$ are the relations introduced in \emph{Definition
	}\ref{complex R} and $\Lambda[G_{a,b}]$ is the exterior algebra with
	generators given by the edges in $\Gamma$. We want to show that there
	is a quasi equivalence between $(\mathcal{C}^\ast_{BS}( \Gamma),\delta)$ and
	$(R(A,\Gamma), d)$.
	\begin{remark}\label{differential}
		The differential in $\cbs \Gamma^*$ can be written as
		\begin{equation*}
		\delta=\delta_{\mathrm{int}}+\delta_{\mathrm{ext}}
		\end{equation*}
		where $\delta_{\mathrm{int}}$ is the differential that removes internal
		edges, meaning edges such that if removed they don't disconnect
		components, and $\delta_{\mathrm{ext}}$ is the differential that removes
		external edges, that are the edges that if removed they disconnect
		components. By \emph{Lemma }\ref{cycle general garph} we have that
		$R(\Gamma)$ is given by linear combination of forests and therefore
		$d=\delta_{\mathrm{ext}}$.
	\end{remark}
	\begin{definizione}
		Let $S\subseteq E(\Gamma)$. We define the following map of graded
		groups:
		\begin{align*}
		F: \mathcal{C}^\ast_{BS}( \Gamma) &\to R(\Gamma,A)\\
		F(G_{S} \otimes x) &= [G_{S}\otimes x]_{\sim}.
		\end{align*}
		In order to simplify the notation we will write $G_{S}\otimes x$
		instead of $[G_{S}\otimes x]_{\sim}$.
	\end{definizione}
	\begin{lemma}
		The map $F$ is compatible with the differential.
	\end{lemma}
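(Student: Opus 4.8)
The strategy is to verify the chain-map identity $F\circ\delta=d\circ F$ directly on the additive generators $G_S\otimes x$, $S\subseteq E(\Gamma)$, $x\in A^{\otimes l(S)}$, distinguishing the case where $[\Gamma:S]$ is a forest from the case where it contains a cycle. I would use three facts already available: the splitting $\delta=\delta_{int}+\delta_{ext}$ together with the statement, both from Remark~\ref{differential}, that on $R_n(A,\Gamma)$ — which by Lemma~\ref{cycle} is spanned by forests modulo the Arnold ideal $I$ — one has $d=\delta_{ext}$; the identity $\mu^*(a)=(a\otimes 1)\Delta$ of Lemma~\ref{lemma.delta}; and the observation that in the presentations $\cbs\Gamma^*=\Lambda(G_\alpha)\otimes A^{\otimes n}/\!\sim$ and $R_n(A,\Gamma)=\Lambda[G_{a,b}]\otimes A^{\otimes n}/\!\sim$ the relation $p_a^*(x)G_{a,b}=p_b^*(x)G_{a,b}$ of Theorem~\ref{kriz} is exactly the defining relation of $\cbs\Gamma^*$, so that $F$ is nothing but the further quotient by $I$. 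In particular $F(G_S\otimes x)=0$ as soon as $[\Gamma:S]$ contains a cycle, by the Corollary to Lemma~\ref{cycle}.

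Case 1: $[\Gamma:S]$ is a forest. Then every edge of $S$ is a bridge, hence $\delta_{int}(G_S\otimes x)=0$ and $\delta(G_S\otimes x)=\delta_{ext}(G_S\otimes x)$ is a signed sum over $\alpha:i\to j$ in $S$ of terms $(-1)^{\nu_\alpha}(-1)^{\tau}G_{S\setminus\alpha}\otimes\Delta_{S,S\setminus\alpha}\otimes x$, in which the component carrying the factor $a_c$ of $x$ splits and $a_c$ is replaced by $\mu^*(a_c)$. On the other side $d\big(F(G_S\otimes x)\big)=\sum_{\alpha:i\to j}(-1)^{\nu_\alpha}\,G_{S\setminus\alpha}\,p_{i,j}^*(\Delta)\otimes x$. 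To match these term by term in $R_n(A,\Gamma)$ I would first use the defining relation $p_a^*(y)G_{a,b}=p_b^*(y)G_{a,b}$ to slide $a_c$ along the edges of $S\setminus\alpha$ inside its component until it sits on the vertex $i$ (if $a_c$ lies on the other side of the removed bridge, slide it onto $j$ and use $(1\otimes a)\Delta=(a\otimes 1)\Delta$ instead); this turns $G_{S\setminus\alpha}\,p_{i,j}^*(\Delta)\otimes x$ into $G_{S\setminus\alpha}\otimes (a_c\otimes 1)\Delta_{i,j}\otimes x$, and $(a_c\otimes 1)\Delta=\mu^*(a_c)$ by Lemma~\ref{lemma.delta} is precisely the Frobenius factor encoded in $\Delta_{S,S\setminus\alpha}$ through the commuting triangle of Section~4. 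The Koszul sign $(-1)^\tau$ was defined exactly to record the reordering of $\mu^*(a_c)$ into positions $i<j$, so it matches the sign produced on the $R_n$ side.

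Case 2: $[\Gamma:S]$ contains a cycle. Then $F(G_S\otimes x)=0$, so it suffices to show $F\big(\delta(G_S\otimes x)\big)=0$. For the external part, removing a bridge preserves every cycle, so $[\Gamma:S\setminus\alpha]$ still contains a cycle for each bridge $\alpha$, whence $G_{S\setminus\alpha}\in I$ and $F(\delta_{ext}(G_S\otimes x))=0$. For the internal part $\delta_{int}(G_S\otimes x)=\sum_{\alpha\ \mathrm{internal}}(-1)^{\nu_\alpha}G_{S\setminus\alpha}\otimes x$ (the Frobenius factor is unchanged since $l(S\setminus\alpha)=l(S)$). I would split the internal edges according to whether $[\Gamma:S\setminus\alpha]$ still has a cycle: for such $\alpha$ the term is killed by $F$ because $G_{S\setminus\alpha}\in I$. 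The remaining internal edges are exactly those lying on every cycle of $[\Gamma:S]$; a short argument with the block decomposition shows that, if nonempty, they are precisely the edges of a single cycle $C$ (the unique cyclic block of $[\Gamma:S]$, which is then a bare cycle). For these, $\sum_{\alpha\in C}(-1)^{\nu_\alpha}G_{S\setminus\alpha}$ equals, up to an overall sign and the common factor $G_{S\setminus C}$, the boundary $\delta\big(s(C)\big)$, which lies in $I$ by Lemma~\ref{cycle}. Hence $F(\delta_{int}(G_S\otimes x))=0$ and the identity follows.

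The routine part is the sign and Frobenius bookkeeping of Case 1, where everything is dictated by Lemma~\ref{lemma.delta} and the definition of $\Delta_{S,S\setminus\alpha}$. The step I expect to require the most care is the internal part of Case 2: the terms of $\delta_{int}$ which are not individually in the Arnold ideal must be recognised, with the signs coming from the fixed ordering of edges, as a multiple of $\delta(s(C))$ — that is, as the boundary of a cycle — which is exactly what Lemma~\ref{cycle} provides.
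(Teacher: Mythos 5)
Your proof is correct and follows essentially the same route as the paper: split into the forest case, where $\delta_{int}$ vanishes and $\delta_{ext}$ is identified with $d$ via Lemma~\ref{lemma.delta} and the relation $p_a^*(x)G_{a,b}=p_b^*(x)G_{a,b}$, and the cycle case, where one shows $F\circ\delta=0$ by reducing everything to the fact that both $s(v)$ and $\delta(s(v))$ lie in $I$ (Lemma~\ref{cycle}). The only cosmetic difference is in the internal part of the cycle case, where you sort edges individually and isolate the unicyclic situation as $\pm G_{S\setminus C}\,\delta(s(C))$, whereas the paper gets the same conclusion in one step from the Leibniz factorization $\delta(G_{S\cup S'})=\delta(G_S)G_{S'}+G_S\delta(G_{S'})$.
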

	\begin{proof}
		Let $G_{\Gamma'}\otimes x\in\mathcal{C}^\ast_{BS}( \Gamma)$, where $\Gamma'$ is a
		subgraph of $\Gamma$ and we denote by $G_\Gamma '$ the product of the generators corresponding to the edges in a graph $\Gamma'$. We want to check the commutativity of the
		following diagram.
		\begin{equation*}
		\xymatrix{
		\mathcal{C}^\ast_{BS}( \Gamma)\ar[r]^{F} \ar[d]_{\delta} & R(A,\Gamma)\ar[d]^{d} \\
		\mathcal{C}^\ast_{BS}( \Gamma)\ar[r]^{F}& R(A,\Gamma) }
		\end{equation*}
		We consider first the case where $\Gamma'$ does not contain any
		cycle. If $\Gamma'$ does not contain any cycle
		\begin{equation*}
		d\circ F(G_{\Gamma'}\otimes x)=d(G_{\Gamma'}\otimes x).
		\end{equation*}
		On the other hand by \emph{Remark }\ref{differential}
		\begin{equation*}
		F\circ \delta(G_{\Gamma'}\otimes x)=F\circ \delta_{\mathrm{ext}}(G_{\Gamma'}\otimes x)=d(G_{\Gamma'}\otimes x).
		\end{equation*}
		Now, suppose that $\Gamma'$ contains a cycle, that we denote by $S$,
		then
		\begin{equation*}
		d\circ F(G_{\Gamma'}\otimes x)=d(0)=0,
		\end{equation*}
		by definition of $F$. To prove the commutativity of the diagram we
		want to show that $F\circ \delta(G_{\Gamma'}\otimes x)=0$. By the
		previous remark,
		\begin{equation*}
		\delta(G_{\Gamma'}\otimes x)= \delta_{\mathrm{int}}+\delta_{\mathrm{ext}}(G_{\Gamma'}\otimes x),
		\end{equation*}
		the first summand is given by
		$\delta_{\mathrm{int}}(G_{\Gamma'}\otimes x)=\delta(G_{S\cup
			S'})G_{\Gamma'/S\cup S'}\otimes x$, where $S'$ is the graph given
		by the internal edges in $\Gamma'$ that are not in $S$. The
		differential $\delta_{\mathrm{int}}$ doesn't change the number of components
		and so it doesn't act on $x\in A^{\otimes l(\Gamma)}$.
		% where $S'$ is the graph with edges given by the
		% internal edges that are not in $S$,
		Now,
		\begin{equation*}
		\delta(G_{S\cup S'})=\delta(G_{S})G_{S'}+G_{S}\delta(G_{S'})\in  I(\Gamma)
		\end{equation*}
		by \emph{Lemma }\ref{cycle general garph} because $G_{S}$ and
		$\delta(G_{S})$ belongs to $I(\Gamma)$, so
		$F\circ \delta_{\mathrm{int}}(G_{\Gamma'}\otimes x)=0$. The second summand is
		\begin{equation*}
		\delta_{\ext}(G_{\Gamma'}\otimes x)=\delta_{\ext}(G_{\Gamma'/S\cup S'})G_{S}G_{S'}\otimes x'
		\end{equation*}
		where $x'\in A^{\otimes l(\Gamma')-1}$. The term belongs to the ideal
		$I(\Gamma)$ since $G_{S}\in I(\Gamma)$ and so
		\begin{equation*}
		F\circ \delta_{\ext}(G_{\Gamma'}\otimes x)=0.
		\end{equation*}
	\end{proof}
	\begin{teorema}\label{quasi eq}
		The map $F$ is a quasi equivalence.
	\end{teorema}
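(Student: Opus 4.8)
The plan is to prove that $F$ induces an isomorphism on cohomology by exhibiting both sides as the total complex of a filtration by the number of internal edges, and comparing the associated spectral sequences (or, more concretely, by an inductive/mapping-cone argument on the internal edges). The key structural input is Remark \ref{differential}: on $\cbs \Gamma^*$ the differential splits as $\delta=\delta_{int}+\delta_{ext}$, where $\delta_{int}$ removes internal (cycle) edges and $\delta_{ext}$ removes external (bridge) edges, while on $R_n(A,\Gamma)$ the Arnold relation kills all graphs containing a cycle, so that $d=\delta_{ext}$ only. Thus $R_n(A,\Gamma)$ should be identified with the $\delta_{int}$-cohomology of $\cbs\Gamma^*$, after which $F$ becomes (up to the identification) the quotient map from a complex to the homology of one of its two anticommuting differentials, and one runs the standard spectral sequence of the bicomplex.

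First I would set up the bicomplex: filter $\cbs\Gamma^*$ by $r_{int}$, the number of internal edges of the subgraph, so that $\delta_{ext}$ preserves the filtration degree and $\delta_{int}$ lowers it by one. The $E_0$ differential is $\delta_{int}$, and the main lemma to prove is that $H^*(\cbs\Gamma^*,\delta_{int})$ is concentrated where $r_{int}=0$, i.e. on the span of forests, and equals $R_n(\Gamma)\otimes A^{\otimes n}/\sim$ as a module — in other words, that $F$ is a quasi-isomorphism with respect to $\delta_{int}$ alone, before turning $\delta_{ext}$ back on. Concretely this says: the subcomplex of $\cbs\Gamma^*$ spanned by graphs with at least one cycle, equipped with $\delta_{int}$, is acyclic, and the cokernel of $\delta_{int}$ on the forest part is exactly $R_n$ (with its $A$-module structure via $\Delta_P$). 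Here I would invoke Lemma \ref{cycle} and its Corollary: the ideal $I$ generated by the Arnold relation coincides with the ideal generated by the $\delta(s(v))$, and contains all cyclic monomials $G_\Gamma$, which is precisely the statement that $\delta_{int}$ generates $I$. The acyclicity of the cyclic part under $\delta_{int}$ is a purely combinatorial statement about the exterior algebra on the edges of each fixed "external skeleton," and should follow from a Koszul-type contracting homotopy (pick a distinguished internal edge in each cycle-containing graph and contract along it), paying attention to the signs $(-1)^\nu$ and the Koszul signs $\tau$ appearing in the formulas for $\delta_{i,j}$ in Section \ref{dual}.

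Once the $E_0=\delta_{int}$-cohomology is identified with $R_n(A,\Gamma)$ as a graded module, the induced $E_1$-differential is exactly $\delta_{ext}=d$, by the already-proven compatibility of $F$ with the total differential (the preceding Lemma) together with the observation that $F$ is surjective with kernel contained in the span of cyclic graphs. Since the filtration is bounded (finitely many edges), the spectral sequence degenerates at $E_1$ (there is nothing below $r_{int}=0$), so $H^*(\cbs\Gamma^*,\delta)\cong H^*(R_n(A,\Gamma),d)$, and tracing through the identification this isomorphism is induced by $F$; hence $F$ is a quasi equivalence. An essentially equivalent but more hands-on route, which I would include if the spectral-sequence bookkeeping gets heavy, is induction on the number of independent cycles of $\Gamma$: if $\Gamma$ has a cycle, pick an edge $\alpha$ on it, write $\cbs\Gamma^*$ as the mapping cone of $\delta_\alpha:\cbs{\Gamma\smallsetminus\alpha}^*\to$ (the $\alpha$-containing part), show the latter map is a quasi-isomorphism onto the relevant piece so the cone computes $\cbs{\Gamma\smallsetminus\alpha}^*$-style data, and match with the corresponding statement $R_n(A,\Gamma)\cong R_n(A,\Gamma\smallsetminus\alpha)/(\text{Arnold})$ on the $R$ side.

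The main obstacle I anticipate is the acyclicity lemma for $\delta_{int}$ on the cyclic part, together with getting all the Koszul and ordering signs to cohere so that the contracting homotopy genuinely squares the differential to zero and is compatible with the $A^{\otimes n}$-coefficients (the internal differential does not touch the tensor factors, which helps, but the sign $\tau$ from reordering $G_{i,j}$'s and the sign $\nu$ from the edge order must be reconciled). A secondary subtlety is checking that the $A$-module structure on $H^*(\cbs\Gamma^*,\delta_{int})$ — coming from the elements $\Delta_P$ and Lemma \ref{lemma.delta} — matches the module structure built into the relations $p_a^*(x)G_{a,b}=p_b^*(x)G_{a,b}$ defining $R_n(A,\Gamma)$; this is where the groundwork of Section 4 on $\mu^*$ and $\Delta_P$ is used, and it is really the content of the claim that $F$ is well-defined and compatible with all structure, not just additive.
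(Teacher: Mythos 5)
Your overall strategy coincides with the paper's: pass to an associated graded complex on which the surviving differential is $\delta_{int}$, identify its cohomology with $R_n(A,\Gamma)$, and observe that the induced differential on that cohomology is $\delta_{ext}=d$; then conclude by degeneration or the five lemma. However, the filtration you propose does not produce this. The number $r_{int}$ of internal edges is \emph{preserved} by $\delta_{ext}$ (removing a bridge creates no new cycles and destroys none) and is \emph{strictly decreased} by $\delta_{int}$, and not by one: removing one edge of a triangle sends $(r_{int},r_{ext})$ from $(3,0)$ to $(0,2)$, so $(\delta_{int},\delta_{ext})$ is not a bicomplex in that bigrading, and in the filtration by $r_{int}$ it is $\delta_{ext}$, not $\delta_{int}$, that survives to the associated graded --- the opposite of what your argument needs. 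The invariant that is preserved by $\delta_{int}$ and shifted by exactly one by $\delta_{ext}$ is the number of connected components $l(S)=|\Phi(S)|$, and that is the filtration the paper actually uses ($\mathcal{F}_k=$ summands with $|P|\geq k$). With that correction the rest of your outline (boundedness, five lemma, matching of the $A^{\otimes n}$-module structures via $\Delta_P$ and the relation $p_a^{\ast}(x)G_{a,b}=p_b^{\ast}(x)G_{a,b}$) is the paper's argument.

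The more serious gap is the acyclicity statement, which you correctly single out as the crux but do not prove. After decomposing the associated graded over partitions and applying K\"unneth (Lemma~\ref{remark2}), the required statement is that for a connected graph on $h$ vertices the complex $C_{conn}$ of connected subgraphs has homology concentrated in the spanning-tree degree $h-1$, with top homology equal to $C_{conn}^{h-1}/I$ (Lemma~\ref{ideal}, Remark~\ref{C/I}). The Koszul-type homotopy $h(S)=e\cdot S$ for a distinguished edge $e$ does not work here: in $d_{conn}h+hd_{conn}$ the faces $(e\cup S)\smallsetminus f$ that are connected only \emph{because} $e$ reconnects them have no cancelling counterpart in $h(d_{conn}S)$, since the corresponding $S\smallsetminus f$ is disconnected and hence zero in $C_{conn}$; the homotopy identity fails precisely on the graphs where connectivity is at stake. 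The paper instead proves this (Lemma~\ref{del-contr}) via the deletion--contraction short exact sequence $0\to C_{conn}(\Gamma\smallsetminus e)\to C_{conn}(\Gamma)\to C_{conn}(\Gamma/e)\to 0$ and induction on the number of edges, with Lemma~\ref{loops} handling the loops and multiple edges created by contraction. Without an argument of this kind your proof is incomplete, since essentially everything else in the theorem is bookkeeping around this one computation.
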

	\begin{proof}
		We want to introduce two filtrations on $\mathcal{C}^\ast_{BS}( \Gamma)$ and on
		$R(A,\Gamma)$, and prove that $F$ is compatible with them and that
		it induces a quasi equivalence on the filtration quotients.
		
		Let $\Gamma$ be a graph with $n$ vertices, $S$ be a subset of the
		set of edges $E(\Gamma)$. $S$ determines a partition of the set of
		vertices, so we have a map $\Phi:\mathcal{E}\rightarrow\mathcal{P}$
		where $\mathcal{P}$ is the set of all partitions of $V(\Gamma)$ and
		$\mathcal{E}$ the set of subsets of $E(\Gamma)$. As noted in
		\emph{Section }\ref{dual} we can rewrite the complex
		$\mathcal{C}^\ast_{BS}( \Gamma)$ as
		\begin{equation*}
		\mathcal{C}^\ast_{BS}( \Gamma) =\bigoplus_{P\in\mathcal{P}}(\bigoplus_{S, \phi(S)=P}G_{S}\otimes A^{\otimes |P|})
		\end{equation*}
		$|P|$ is the number of classes in the partition $P$.\\
		There is a filtration of $\cbs \Gamma (A)^*$ given by
		\begin{equation*}
		\mathcal{F}_{k}=\bigoplus_{P\in\mathcal{P}, |P|\geq k}   \Bigl(\,\bigoplus_{S, \phi(S)=P}G_{S}\otimes A^{\otimes |P|}\Bigr).
		\end{equation*}
		$\mathcal{F}_{k}$ is a subcomplex of $\cbs \Gamma ^*$ since the
		differential $\delta$ acts by removing edges and so increasing the
		number of components. So
		\begin{equation*}
		\mathcal{F}_{n}\subseteq\dots\subseteq \mathcal{F}_{k}\subseteq \mathcal{F}_{k-1}\subseteq \dots\subseteq \cbs\Gamma^{\ast}
		\end{equation*}
		and $|V(\Gamma)|=n$.
		% Moreover, we have a short exact sequence given by
		% inclusion and quotient
		% map
		% $$0\rightarrow\mathcal{F}_{k}\rightarrow\mathcal{F}_{k-1}\rightarrow\mathcal{F}_{k-1}/\mathcal{F}_{k}\rightarrow
		% 0$$
		
		Similarly we have a filtration on $R(A,\Gamma)$ in terms of
		partitions. Since
		\begin{equation*} R(\Gamma)
		=\bigoplus_{P\in\mathcal{P}}\bigoplus_{S,
			\phi(S)=P}G_{S}\Bigm/\bigoplus_{S, \phi(S)=P}G_{S}\cap I(\Gamma)
		\end{equation*} we can define
		\begin{equation*}
		\mathcal{F'}_{k}=\bigoplus_{P\in\mathcal{P}, |P|\geq k}\Bigl(\bigoplus_{S, \phi(S)=P}G_{S}\Bigm/\bigoplus_{S, \phi(S)=P}G_{S}\cap  I(\Gamma) \Bigr)\otimes A^{\otimes |P|}
		\end{equation*}
		as before $\mathcal{F'}_{k}$ is a subcomplex of $R(A,\Gamma)$ since
		the differential $d$ acts by removing edges and so increasing the
		number of components. So
		\begin{equation*}
		\mathcal{F'}_{n}\subseteq\dots\subseteq \mathcal{F'}_{k}\subseteq \mathcal{F'}_{k-1}\subseteq \dots\subseteq R(A,\Gamma)
		\end{equation*}
		and $|V(\Gamma)|=n$.  We want to show that $F$ is compatible with
		the filtrations, that is
		$F(\mathcal{F}_{k})\subseteq\mathcal{F'}_{k}$. This is clearly true
		since $F(G_{\Gamma}\otimes x)=G_{\Gamma}\otimes x$ is $\Gamma$ if a
		forest and $0$ otherwise.
		
		% So
		% $$F(\mathcal{F}_{k})=F(\bigoplus_{P\in\mathcal{P},
		% |P|\geq k}(\bigoplus_{S, \phi(S)=P}G_{S}\otimes
		% A^{\otimes |P|}))=\bigoplus_{P\in\mathcal{P},
		% |P|\geq k}(\bigoplus_{S, \phi(S)=P}E(S)\otimes
		% A^{\otimes |P|})$$
		There are two short exact sequences given by inclusion and quotient
		map
		\begin{equation*}
		\mathcal{F}_{k-1}\longrightarrow\mathcal{F}_{k}\longrightarrow\mathcal{F}_{k-1}/\mathcal{F}_{k}
		\end{equation*}
		and
		\begin{equation*}
		\mathcal{F'}_{k-1}\longrightarrow\mathcal{F'}_{k}\longrightarrow\mathcal{F'}_{k-1}/\mathcal{F'}_{k}.
		\end{equation*}
		The last step of the proof consists in showing that for every $k$,
		\begin{equation*}
		F:\mathcal{F}_{k-1}/\mathcal{F}_{k}\longrightarrow \mathcal{F'}_{k-1}/\mathcal{F'}_{k}
		\end{equation*}
		is a quasi equivalence and then use the long exact sequences in
		homology induced from the short exact sequences to prove the result.
		Now,
		\begin{equation*}
		\mathcal{F}_{k-1}/\mathcal{F}_{k}=\bigoplus_{P\in\mathcal{P}, |P|= k-1}\Bigl(\bigoplus_{S, \phi(S)=P}G_{S}\otimes A^{\otimes |P|}\Bigr)
		\end{equation*}
		is determined by the partitions with exactly $k-1$ classes.
		
		Let $S$ be the maximal subset of $E(\Gamma)$ with respect to the inclusion that determines a partition of
		the set of vertices $P=\{P_{1},\dots,P_{l}\}$ and let
		$\Gamma_{i}^{S}$, $0\leq i\leq l$, be the connected subgraph of $S$
		corresponding to the element $P_{i}$ in the partition. By
		\emph{Lemma }\ref{remark2} we can rewrite
		$\mathcal{F}_{k-1}/\mathcal{F}_{k}$
		as
		\begin{equation*}
		\mathcal{F}_{k-1}/\mathcal{F}_{k}=\bigoplus_{P\in\mathcal{P}, |P|= k-1}\Bigl(\bigotimes_ {1\leq i\leq k-1}C_{\conn}(\Gamma_{i}^{S})\Bigr)\otimes A^{\otimes |P|}.
		\end{equation*}
		We define the chain complex $C_{\conn}(\Gamma)$ for connected graphs
		with $h$ vertices, and for every $0\leq i\leq h$,
		$C_{\conn}(\Gamma)^{i}$ is the free abelian group generated by all connected
		subgraphs of $\Gamma$ with $i$ edges. Let $S$ be a connected
		subgraph of $\Gamma$, the differential is given by
		\begin{equation*}
		d_{\conn}(S)=\sum_{e\in E(\Gamma)}(-1)^{\nu}(S\smallsetminus e)
		\end{equation*}
		where $\nu$ is the position of edge $e\in E(\Gamma)$ in ascending order. If $S\smallsetminus e$ is not connected $d_{\conn}(S)=0$.\\
		$\mathcal{F'}_{k-1}/\mathcal{F'}_{k}$ can be written as
		\begin{equation*}
		\mathcal{F'}_{k-1}/\mathcal{F'}_{k}=\bigoplus_{P\in\mathcal{P}, |P|= k-1} \Bigl(\bigotimes_ {1\leq i\leq k-1}\bigoplus_{j}C_{\conn}(\Gamma_{i,j})\Bigr)\otimes A^{\otimes |P|}
		\end{equation*}
		where now $\Gamma_{i,j}\subset \Gamma_i$ are spanning trees. In particular, we have that
		$C_{\conn}(\Gamma')= C_{\conn}(\Gamma)/I(\Gamma)$.  By the
		\emph{K\"{u}nneth formula}, the problem reduces to checking if
		\begin{equation*}
		q:C_{\conn}(\Gamma)\rightarrow C_{\conn}(\Gamma)/I(\Gamma)
		\end{equation*}
		is a quasi equivalence. Here by $I(\Gamma)$ we mean the subgroup
		given by $\alpha(I(\Gamma))$ and $\alpha$ is the isomorphism defined
		in \emph{Lemma }\ref{remark2}.  By \emph{Lemma }\ref{del-contr} the
		homology of $C_{\conn}(\Gamma)$ is concentrated in degree $n-1$.
		Now, $C_{\conn}(\Gamma)/I(\Gamma)$ is a complex concentrated in
		dimension $n-1$ by \emph{Remark }\ref{C/I}, that is the chain group
		generated by the trees and
		\begin{equation*}
		(C_{\conn}(\Gamma)/I(\Gamma))^{n-1}=C_{\conn}(\Gamma)^{n-1}/d_{\conn}(C_{\conn}(\Gamma)^{n})
		\end{equation*}
		because by \emph{Lemma }\ref{ideal}
		$d_{\conn}(C_{\conn}(\Gamma)^{n})=I(\Gamma)$. Since
		$C_{\conn}^{n-2}=0$, we have
		\begin{equation*}
		H_{n-1}(C_{\conn}(\Gamma))=C_{\conn}(\Gamma)^{n-1}/I(\Gamma)=H_{n-1}(C_{\conn}(\Gamma)/I(\Gamma))
		\end{equation*}
		and
		\begin{equation*}
		H_{i}(C_{\conn}(\Gamma))=H_{i}(C_{\conn}(\Gamma)/I(\Gamma))=0
		\end{equation*}
		for $i\neq n-1$. Then
		\begin{equation*}
		H_{\ast}(C_{\conn}(\Gamma))=H_{\ast}(C_{\conn}(\Gamma)/I(\Gamma)),
		\end{equation*}
		so $q:C_{\conn}(\Gamma)\rightarrow C_{\conn} (\Gamma)/I(\Gamma)$ is a
		quasi equivalence.

		Finally we consider the long exact sequences in homology
		\begin{equation*}
		\xymatrix{
			H_{i+1}(\mathcal{F}_{k-1}/\mathcal{F}_{k})\ar[r]\ar[d]^{\simeq}\ar[r]&H_{i}(\mathcal{F}_{k})\ar[r]\ar[d]_{F_{\ast}} &H_{i}(\mathcal{F}_{k-1})\ar[r]\ar[d]^{F_{\ast}}& H_{i}( \mathcal{F}_{k-1}/\mathcal{F}_{k}\ar[d]^{\simeq})\\
			H_{i+1}(\mathcal{F'}_{k-1}/\mathcal{F'}_{k})\ar[r]&H_{i}(\mathcal{F'}_{k})\ar[r]
			&H_{i}(\mathcal{F'}_{k-1})\ar[r]&
			H_{i}(\mathcal{F'}_{k-1}/\mathcal{F'}_{k}) }
		\end{equation*}
		
		Since $\mathcal{F}_{n}=\mathcal{F'}_{n}$, we have that
		$H_{i}(\mathcal{F}_{n})= H_{i}(\mathcal{F'}_{n})$ for every
		$i\geq 0$. We can then use the \emph{Five Lemma} and induction on
		$k$ with initial step given by
		$H_{i}(\mathcal{F}_{n})=H_{i}(\mathcal{F'}_{n})$.
		\begin{equation*}\resizebox{1\hsize}{!}{ \xymatrix{
				H_{i}(\mathcal{F}_{n-1}/\mathcal{F}_{n})\ar[r]\ar[d]^{\simeq}\ar[r]& H_{i-1}(\mathcal{F}_{n})\ar[r]\ar[d]_{=} & H_{i-1}(\mathcal{F}_{n-1})\ar[r]\ar[d]^{q}& H_{i-1}( \mathcal{F}_{n-1}/\mathcal{F}_{k}\ar[r]\ar[d]^{\simeq})& H_{i-2}(\mathcal{F}_{n})\ar[d]_{=} \\
				H_{i}(\mathcal{F'}_{n-1}/\mathcal{F'}_{n})\ar[r]&
				H_{i-1}(\mathcal{F'}_{n})\ar[r] &
				H_{i-1}(\mathcal{F'}_{n-1})\ar[r]&
				H_{i-1}(\mathcal{F'}_{n-1}/\mathcal{F'}_{n}) \ar[r] &
				H_{i-2}(\mathcal{F'}_{n}) }}
		\end{equation*}
		Therefore $H_{i}(\mathcal{F}_{k})\simeq H_{i}(\mathcal{F'}_{k})$ for
		every $k$ and $i$.  This concludes the proof that $F$ is a quasi
		equivalence.

	\end{proof}

	\begin{lemma}\label{remark2}
		Let $S$ be the maximal subset of $E(\Gamma)$ with respect to the inclusion that determines the partition of
		the set of vertices $P=\{P_{1},\dots,P_{l}\}$. Let
		$\Gamma_{i}^{S}$, $0\leq i\leq l$, be the connected subgraph of $S$
		corresponding to the element $P_{i}$ in the partition. The map
		\begin{salign*}
			\alpha:\qquad&
			\smashoperator[l]{\bigoplus_{P, |P|=
					k-1}}\bigoplus_{T, \phi(T)=P}G_{T}\otimes A^{\otimes
				|P|}
			\\
			&\longrightarrow\bigoplus_{P\in\mathcal{P}, |P|=
				k-1}\Bigl(\bigotimes_ {1\leq i\leq
				k-1}C_{\conn}(\Gamma^S_{i})\otimes A^{\otimes |P|}\Bigr)
		\end{salign*}
		defined for every $T\subset S$ by $$\alpha(G_T\otimes a_1\otimes\dots\otimes a_{l(T)})=\Gamma^T_{1}\otimes\dots\otimes\Gamma^T_{i}\otimes \dots\otimes \Gamma^T_{k-1}\otimes a_1\otimes\dots\otimes a_{l(T)}$$
		is a graded group isomorphism.
	\end{lemma}
	\begin{proof}
		Let $P$ be a partition with $k-1$ classes and
		consider the maximal subset $S\subseteq E(\Gamma)$, such that
		$\Phi(S)=P$. Consider a class $P_{i}$
		corresponding to a connected subgraph of $S$,
		$\Gamma_{i}$. Since the tensor product
		$A^{\otimes |P|}$ is not affected by the
		differential, we can reduce to building a map
		\begin{equation*}
		\alpha:\bigoplus_{T, \phi(T)=P}G_{T}\rightarrow\bigotimes_ {1\leq i\leq k-1}C_{\conn}(\Gamma_{i})
		\end{equation*}
		where $G_T$ is the element of $\Lambda[G_{S}]$ the exterior algebra
		with generators given by the edges in $S$. Note that if
		$T$ be a subgraph of $S$, it can be written
		as $T=\Gamma^T_{1}\cup\dots\cup \Gamma^T_{k-1}$, where
		$\Gamma^T_{i}$ are connected subgraphs of
		$\Gamma_{i}$. The map $\alpha$
		is a group isomorphism, since there is a
		bijection between elements of the base of
		$\bigoplus_{T, \phi(T)=P}G_{T}$ and
		elements
		$\Gamma^T_{1}\otimes\dots\otimes\Gamma^T_{i}\otimes \dots\otimes \Gamma^S_{k-1}$ of the base of
		$\bigotimes_ {1\leq i\leq
			k-1}C_{\conn}(\Gamma_{i})$. Moreover, the map preserve the grading since the degree in   $\bigoplus_{S, \phi(S)=P}G_{S}$ is given by the number of edges in $S$ as well as the degree in $\bigotimes_ {1\leq i\leq k-1}C_{\conn}(\Gamma_{i})$ by the definition of the complex $C_{\conn}(\Gamma)$.
	\end{proof}

	\begin{remark}\label{C/I}
		As a consequence of \emph{Lemma }\ref{cycle general
			garph} we have that
		$\alpha(I(\Gamma))\cap C_{\conn}$ contains the
		connected graphs with cycles, so the connected
		graphs that are not spanning trees. Therefore,
		$C_{\conn}^{i}/I(\Gamma)$ is trivial for all
		$i\neq n-1$.
	\end{remark}
	\begin{lemma}\label{ideal}
		Let $(C_{\conn}(\Gamma), d_{\conn})$ be the chain
		complex defined in the proof, $\Gamma$ a connected
		graph with $n$ vertices, then
		$d_{\conn}(C_{\conn}^{n})=I(\Gamma)\cap
		C_{\conn}^{n-1}$.
	\end{lemma}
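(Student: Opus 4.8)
The plan is to prove the two inclusions $d_{conn}(C_{conn}^n)\subseteq I\cap C_{conn}^{n-1}$ and $I\cap C_{conn}^{n-1}\subseteq d_{conn}(C_{conn}^n)$ separately, translating freely between the exterior-algebra picture (products $G_S$ of edge generators) and the subgraph picture via the isomorphism $\alpha$ of \textit{Lemma }\ref{remark2}, so that all signs below are the ones it produces. The structural observation used throughout is that a connected subgraph $S\subseteq\Gamma$ on all $n$ vertices with exactly $n$ edges has first Betti number $n-n+1=1$, hence contains a unique cycle $C=C(S)$; removing $e\in E(S)$ leaves $S\smallsetminus e$ connected exactly when $e\in C$, in which case $S\smallsetminus e$ is a spanning tree, and otherwise disconnects it. Thus $d_{conn}(S)=\sum_{e\in C}(-1)^{\nu_e}(S\smallsetminus e)$, and under $\alpha$ this equals $\pm\, G_{S\smallsetminus C}\cdot\delta(s(C))$: the cyclic element $\delta(s(C))$ of \textit{Lemma }\ref{cycle} multiplied by the monomial $G_{S\smallsetminus C}$, with no cancellation since the edge sets $S\smallsetminus C$ and $C$ are disjoint.

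For $d_{conn}(C_{conn}^n)\subseteq I\cap C_{conn}^{n-1}$: given a generator $S$ of $C_{conn}^n$, its cycle $C$ has length $\ge 3$, so $\delta(s(C))\in I$ by \textit{Lemma }\ref{cycle}; since $I$ is an ideal, $G_{S\smallsetminus C}\cdot\delta(s(C))\in I$, hence $d_{conn}(S)\in I$, and $d_{conn}(S)\in C_{conn}^{n-1}$ by definition of the differential. (If $\Gamma$ is a tree there are no such $S$ and both sides are $0$.)

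For $I\cap C_{conn}^{n-1}\subseteq d_{conn}(C_{conn}^n)$: decompose the degree-$(n-1)$ part of the exterior algebra on the edge generators as $C_{conn}^{n-1}\oplus D$, where $D$ is the span of the monomials $G_T$ with $|T|=n-1$ and $[\Gamma:T]$ disconnected, and let $\pi$ be the projection with kernel $D$. If $x\in I\cap C_{conn}^{n-1}$ then $x=\pi(x)$, and since $I$ is generated by the elements $\delta(s(C))$ ($C$ a cycle of $\Gamma$), the homogeneous element $x$ is a $\ZZ$-combination of terms $G_W\cdot\delta(s(C))$ of degree $n-1$, so that $|W|=n-|C|$. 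It therefore suffices to show $\pi\big(G_W\cdot\delta(s(C))\big)\in d_{conn}(C_{conn}^n)$ for each such term, which I do by cases. If $|W\cap C|\ge 2$ the product is $0$. If $|W\cap C|=1$, only one monomial survives the expansion, and its underlying graph contains the whole cycle $C$ while having only $n-1$ edges, so it is disconnected and $\pi$ kills it. If $W\cap C=\emptyset$ but $[\Gamma:W\cup C]$ is disconnected, its Betti number is $\ge 2$, so every $W\cup(C\smallsetminus e)$ (which has $n-1$ edges) is still disconnected and $\pi$ kills the whole term. Finally, if $W\cap C=\emptyset$ and $[\Gamma:W\cup C]$ is connected, the Betti count forces $W$ to be a forest and $W\cup C$ to be a connected $n$-vertex, $n$-edge subgraph whose unique cycle is $C$; hence no monomial is killed and $\pi\big(G_W\cdot\delta(s(C))\big)=G_W\cdot\delta(s(C))=\pm\, d_{conn}(W\cup C)\in d_{conn}(C_{conn}^n)$. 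Summing over the terms of $x$ gives $x\in d_{conn}(C_{conn}^n)$.

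The only genuinely delicate part is the case analysis in the second inclusion, in particular the point that when one expands a generator $G_W\delta(s(C))$ of $I$, every monomial not annihilated by $\pi$ already assembles into $d_{conn}$ of a single unicyclic subgraph. The Betti-number count does all the work here: $\text{edges}-\text{vertices}+\text{components}=1$ pins down a single cycle, and $\ge 2$ forbids connectedness on $n-1$ edges. The sign reconciliation between $d_{conn}$ and $G_{S\smallsetminus C}\cdot\delta(s(C))$ is routine and is exactly the compatibility recorded in \textit{Lemma }\ref{remark2}.
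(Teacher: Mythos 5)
Your proof is correct. Its first half --- writing a connected spanning subgraph $S$ with $n$ edges as its unique cycle $C$ together with a forest of bridges $S\smallsetminus C$, so that $d_{conn}(S)=\pm\,G_{S\smallsetminus C}\cdot\delta(s(C))\in I$ --- is exactly the paper's argument for the inclusion $d_{conn}(C_{conn}^{n})\subseteq I\cap C_{conn}^{n-1}$. Where you genuinely depart from the paper is in the reverse inclusion: the paper dispatches it in one sentence that cites Lemma~\ref{ideal} itself (evidently a slip for Lemma~\ref{cycle}), and even granting the intended reference it never explains why a degree-$(n-1)$ element of $I$ supported on connected monomials must lie in the image of $d_{conn}$, rather than merely in the ideal that image generates. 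Your argument supplies this: you introduce the projection $\pi$ with kernel spanned by the disconnected monomials, write $x\in I\cap C_{conn}^{n-1}$ as a combination of generators $G_{W}\cdot\delta(s(C))$, and run the case analysis on $|W\cap C|$ and on the connectivity of $[\Gamma:W\cup C]$; the count $|E|-|V|+(\text{components})$ does the work of showing that every term not annihilated by $\pi$ reassembles into $d_{conn}$ of a single unicyclic spanning subgraph. I checked the four cases and they are all sound (in the case $|W\cap C|=1$ the one surviving monomial has edge set $W\cup C$, which has $n-1$ edges yet contains a cycle, hence is disconnected and killed by $\pi$, as you say). This is precisely the half of the equality that the main theorem actually uses when it identifies $(C_{conn}(\Gamma)/I)^{n-1}$ with $C_{conn}(\Gamma)^{n-1}/d_{conn}(C_{conn}(\Gamma)^{n})$, so your version is a genuine improvement on the paper's. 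The remaining caveats are cosmetic: your generating set for $I$ (all $\delta(s(C))$ for $C$ a cycle of $\Gamma$) is the one furnished by Lemma~\ref{cycle}, and the deferred sign bookkeeping is indeed routine once one observes that $d_{conn}=\pi\circ\delta$ on monomials.
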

	\begin{proof}
		$C_{\conn}^{n}$ is the free group generated by
		connected subgraph $S$ of $\Gamma$ with $n$
		edges. That is the image under the map $\alpha$ of
		the algebra
		\begin{equation*}
		\bigoplus_{S\subseteq E(\Gamma), |S|=n}G_{S}.
		\end{equation*}
		Since $\Gamma$ has $n$ vertices, $S$ must contain a
		cycle.  We call $C$ the cycle, and  the exterior
		algebra $G_{S}$ is given by the product of the edges
		in $C$ and the product of the rest of the edges, that we
		call $S'$. Now
		$d_{\conn}(S)=\sum_{e}d_{\conn,e}(C)S'$, since
		removing one edge in $S'$ will give a non connected
		graph. By \emph{Lemma }\ref{ideal} the ideal
		generated by $d_{\conn}(C^{n})$ is
		$\alpha(I(\Gamma))\cap C_{\conn}^{n-1}$.
		% of length $3$, so $S=e_{i,j}e_{j,l}e_{i,l}S'$,
		% $S'$ is the graph given by the renaming edges,
		% suppose $i\leq j\leq l$ and the edges are ordered
		% with the lexicographic order so
		% $e_{i,j}\leq e_{i,l}\leq
		% e_{j,l}$.
		% $$\delta(S)=(-1)^{\nu}e_{j,l}e_{i,l}S'+(-1)^{\nu'}e_{i,j}e_{i,l}S'+(-1)^{\nu''}e_{i,j}e_{j,l}S'$$
		% since removing an edge in $S'$ would create a non
		% connected graph. By lemma \cite{ideal} the ideal
		% generated by by $\delta(S)$ is $\alpha(I)$.
		% And
		% $(-1)^{\nu}e_{j,l}e_{i,l}S'+(-1)^{\nu+1}e_{i,j}e_{i,l}S'+(-1)^{\nu}e_{i,j}e_{j,l}S'=(-1)^{\nu}e_{j,l}e_{i,l}S'+(-1)^{\nu}e_{i,l}e_{i,j}S'+(-1)^{\nu}e_{i,j}e_{j,l}S'$,
		% $\alpha(G_{a,b}G_{b,c}+G_{b,c}G_{c,a}+G_{c,a}G_{a,b})=e_{j,l}e_{i,l}S'+e_{i,l}e_{i,j}S'+e_{i,j}e_{j,l}S'$. So
		% the ideal generated by $\delta(S)$ is $\alpha(I)$.
	\end{proof}
	\begin{lemma}\label{del-contr}
		Let $\Gamma$ be a graph with $n$ vertices. The
		homology of the complex $C_{\conn}(\Gamma)$ is
		concentrated in degree $n-1$.
	\end{lemma}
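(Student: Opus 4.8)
The plan is to prove the slightly more general statement that $H_*(C_{conn}(\Gamma))$ is concentrated in degree $|V(\Gamma)|-1$ for \emph{every connected multigraph} $\Gamma$ (allowing loops and parallel edges, since these appear upon contraction), by a deletion--contraction induction on the number of edges. First I would dispose of the degenerate cases. If $\Gamma$ is disconnected there is no connected spanning subgraph, so $C_{conn}(\Gamma)=0$ and there is nothing to prove; assume $\Gamma$ connected. If $\Gamma$ carries a loop $\ell$, then a spanning subgraph $S$ is connected iff $S\setminus\ell$ is, and splitting the basis of $C_{conn}(\Gamma)$ according to whether $\ell\in S$ exhibits $C_{conn}(\Gamma)$ as the mapping cone of the identity of $C_{conn}(\Gamma\setminus\ell)$, hence acyclic; the claim holds vacuously. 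So assume $\Gamma$ loopless. The base case is then $|E(\Gamma)|=n-1$: such a connected loopless graph is a tree, its only connected spanning subgraph is $\Gamma$ itself, and $C_{conn}(\Gamma)=\mathbb{Z}$ in degree $n-1$.

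For the inductive step, assume $|E(\Gamma)|\ge n$, so $\Gamma$ contains a cycle, hence a non-bridge edge $e$. Let $C'\subseteq C_{conn}(\Gamma)$ be the span of the connected spanning subgraphs $S$ with $e\notin S$. Removing an edge from such an $S$ cannot introduce $e$, so $C'$ is a subcomplex, and in fact $C'=C_{conn}(\Gamma\setminus e)$, where $\Gamma\setminus e$ is connected (as $e$ is not a bridge), with $n$ vertices and $|E|-1$ edges. The quotient $C_{conn}(\Gamma)/C'$ has as basis the connected spanning subgraphs $S$ with $e\in S$; in its differential the term $S\setminus e$ is killed, and only the removals of edges $e'\neq e$ survive. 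Contraction of $e$ gives a bijection $S\mapsto S/e$ onto the connected spanning subgraphs of $\Gamma/e$; since deletion of an edge $e'\neq e$ commutes with contraction of $e$, and since $S\setminus e'$ is connected iff $(S/e)\setminus\overline{e'}$ is (the contracted edge $e$ always joins two vertices of a single component of $S\setminus e'$), this bijection identifies $C_{conn}(\Gamma)/C'$ with $C_{conn}(\Gamma/e)$, shifted so that a subgraph with $i$ edges sits in degree $i$. Here $\Gamma/e$ is connected with $n-1$ vertices and $|E|-1$ edges, possibly with loops or parallel edges, which is exactly why the statement is set up for multigraphs.

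This yields a short exact sequence of chain complexes
\[
0\to C_{conn}(\Gamma\setminus e)\to C_{conn}(\Gamma)\to C_{conn}(\Gamma/e)[1]\to 0,
\]
where $[1]$ denotes the upward degree shift just described (the Koszul-type sign identifications being routine). By the inductive hypothesis $H_*(C_{conn}(\Gamma\setminus e))$ is concentrated in degree $n-1$, and $H_*(C_{conn}(\Gamma/e))$ is concentrated in degree $(n-1)-1=n-2$, so $H_*(C_{conn}(\Gamma/e)[1])$ is concentrated in degree $n-1$. In the long exact homology sequence, for every $i\neq n-1$ the two neighbouring terms $H_i(C_{conn}(\Gamma\setminus e))$ and $H_i(C_{conn}(\Gamma/e)[1])$ vanish, forcing $H_i(C_{conn}(\Gamma))=0$. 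This closes the induction and proves the lemma.

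The only delicate point is the identification of the quotient complex with a shifted copy of $C_{conn}(\Gamma/e)$: one must check that connectedness of $S\setminus e'$ is correctly detected after contracting $e$, that the signs of $d_{conn}$ are compatible with the shift, and that the contraction may create loops and multiple edges, which is harmless once $C_{conn}$ has been defined for multigraphs and loops are treated by the mapping-cone observation above. Everything else is the formal machinery of short and long exact sequences. (An alternative route, which I would mention only in passing, is to realise $C_{conn}(\Gamma)$ as the quotient of the acyclic ``simplex'' complex $\Lambda[G_\alpha:\alpha\in E(\Gamma)]$ by the subcomplex spanned by disconnected spanning subgraphs and analyse that subcomplex, but the deletion--contraction argument is cleaner.)
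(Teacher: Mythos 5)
Your proof is correct and follows essentially the same deletion--contraction strategy as the paper: the short exact sequence $0\to C_{conn}(\Gamma\setminus e)\to C_{conn}(\Gamma)\to C_{conn}(\Gamma/e)\to 0$ (with the degree shift on the quotient), the induced long exact sequence, and induction on the number of edges, with loops and multiple edges disposed of separately. Your treatment is in fact slightly cleaner on two points where the paper is terse --- you make the degree shift on the contraction term explicit and you handle the loop case by observing that the complex is a cone on an isomorphism, whereas the paper relegates loops and parallel edges to a separate lemma using the same exact-sequence mechanism --- but these are presentational differences, not a different argument.
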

	\begin{proof}
		Let $\Gamma$ be a connected graph with $n$ vertices
		and let $e$ be an edge in $E(\Gamma)$. We order the
		edges in $\Gamma$ so that $e$ is the last edge. We
		denote by $\Gamma\smallsetminus e$ the graph obtained
		from $\Gamma$ by deleting the edge $e$ and
		$\Gamma/e$ the graph obtained from $\Gamma$ by
		contracting the edge $e$.  We want to prove that we have a short exact
		sequence
		\begin{equation*}
		0\rightarrow C_{\conn}(\Gamma\smallsetminus e)\xrightarrow{\alpha} C_{\conn}(\Gamma)\xrightarrow{\beta} C_{\conn}(\Gamma/ e)\rightarrow0.
		\end{equation*}
		The map $\alpha$ is
		the inclusion of subgraphs and is injective since
		$\ker(\alpha)$ is given by graphs that are mapped to
		$0$, but these are the disconnected graph that are
		$0$ also in $C_{\conn}^{i}(\Gamma\smallsetminus e)$, so
		$\ker(\alpha)=0$. $\beta$ is the contraction of the
		edge $e$ and it is surjective since every element in
		$C_{\conn}^{i}(\Gamma/e)$ is the image of an element
		in $C_{\conn}^{i}(\Gamma)$, and if a graph is
		disconnected in $C_{\conn}^{i}(\Gamma/e)$ so it is in
		$C_{\conn}^{i}(\Gamma)$. Now we want to show that
		$\alpha$ and $\beta$ are chain maps, so that the
		squares in the following diagram commute.
		\begin{equation*}
		\xymatrix{
			0\ar[r]\ar[d]\ar[r]&C_{\conn}^{i+1}(\Gamma\smallsetminus e)\ar[r]^{\alpha}\ar[d]^{d_{\conn}} &C_{\conn}^{i+1}(\Gamma)\ar[r]^{\beta}\ar[d]^{d_{\conn}}& C_{\conn}^{i}(\Gamma/ e )\ar[r]\ar[d]^{d_{\conn}}& 0\ar[d]\\
			0\ar[r]&C_{\conn}^{i}(\Gamma\smallsetminus
			e)\ar[r]^{\alpha}
			&C_{\conn}^{i}(\Gamma)\ar[r]^{\beta}&
			C_{\conn}^{i-1}(\Gamma/e)\ar[r]& 0 }
		\end{equation*}
		The right and left square are clearly
		commutative. Consider the second square, let
		$S\in C_{\conn}^{i}(\Gamma\smallsetminus e)$ then
		\begin{equation*}
		\alpha(d_{\conn}(S))=\alpha\Bigl(\,\sum_{l\in E(S)}(-1)^{\nu_{l}}S\smallsetminus l\Bigr)=\sum_{l\in E(S)}(-1)^{\nu_{l}}S\smallsetminus l
		\end{equation*}
		and
		\begin{equation*}
		d_{\conn}(\alpha(S))=d_{\conn}(S)=\sum_{l\in E(S)}(-1)^{\nu_{l}}S\smallsetminus l.
		\end{equation*}
		Consider now the third square, let
		$S\in C_{\conn}^{i}(\Gamma)$,
		\begin{equation*}
		\beta(d_{\conn}(S))=\beta\Bigl(\,\sum_{l\in E(S)}(-1)^{\nu_{l}}S\smallsetminus l\Bigr)=\sum_{l\in E(S\smallsetminus e)}(-1)^{\nu_{l}}(S\smallsetminus l)/e
		\end{equation*}
		and
		\begin{equation*}
		d_{\conn}(\beta(S))=d_{\conn}(S/e)=\sum_{l\in E(S\smallsetminus e)}(-1)^{\nu_{l}}S/e\smallsetminus l,
		\end{equation*}
		since we ordered the edges in $\Gamma$ so that $e$
		is the last edge. Reordering the edges of $\Gamma$
		commutes with the differential so considering the
		chain where the edge $e$ is the last edge in
		$\Gamma$ doesn't affect the computation of the
		homology.  There are long exact sequences in
		homology and we proceed by induction.
		\begin{align*}		
		H_{i-1}(C_{\conn}(\Gamma/e))\longrightarrow {} &H_{i-1}(C_{\conn}(\Gamma\smallsetminus e))\longrightarrow H_{i-1}(C_{\conn}(\Gamma))\\
		\longrightarrow {} &H_{i-2}(
		C_{\conn}(\Gamma/e))\longrightarrow
		H_{i-2}(C_{\conn}(\Gamma\smallsetminus e))
		\end{align*}
		From the long exact sequence follows that if
		the homology of $C_{\conn}(\Gamma\smallsetminus e)$
		is concentrated in degree $n-1$ and the
		homology of $C_{\conn}(\Gamma/ e)$ is
		concentrated in degree $n-2$, that are the
		degrees represented by trees, then the
		homology of $C_{\conn}(\Gamma)$ is concentrated
		in degree $n-1$. We prove by induction on the
		number of edges in $\Gamma$ that the homology
		of $C_{\conn}(\Gamma)$ is concentrated in
		degree $n-1$, where $n$ is the number of
		vertices in $\Gamma$.  Let $\Gamma$ be a
		connected graph with one edge $e$ and two
		vertices, then $\Gamma\smallsetminus e$ is a
		disconnected graph and so
		$C_{\conn}(\Gamma\smallsetminus e)$ is
		trivial. $\Gamma/e$ is a graph with one vertex
		and no edges, the complex $C_{\conn}(\Gamma/e)$
		is concentrated in degree
		$0$. $H_{\ast}(C_{\conn}(\Gamma/e))$ is
		concentrated in degree $0$ that is $n-2$ and
		so $H_{\ast}(C_{\conn}(\Gamma))$ is
		concentrated in degree $1=n-1$. Now suppose by
		induction that the statement is true for any 
		graph $\Gamma$ with $|E(\Gamma)|< k$. We want
		to prove it for $|E(\Gamma)|=k$. Then
		$\Gamma\smallsetminus e$ is either disconnected or
		it is a connected graph with $k-1$ edges and
		$n$ vertices. In the first case the homology
		is trivial, in the second case the homology
		$H_{\ast}(C_{\conn}(\Gamma\smallsetminus e))$ is
		concentrated in degree $n-1$ by inductive
		hypothesis. $\Gamma/e$ is a graph with $k-1$
		edges and $n-1$ vertices, it can have a loop
		or a multiple edge, by \emph{Lemma
		}\ref{loops} the homology
		$H_{\ast}(C_{\conn}(\Gamma/ e))$ is either
		trivial or concentrated in degree $n-2$.
	\end{proof}
	\begin{remark}
		This lemma provides an alternative proof of
		the result given by \emph{Vassilev} in
		\cite{Vassiliev} regarding the homology of the
		complex of connected subgraphs of the complete
		graph. The proof could be already present in
		the literature but we are not aware of any
		references.
	\end{remark}
	\begin{lemma}\label{loops}
		Let $\Gamma$ be a graph. If $\Gamma$ contains
		a loop, then the homology groups
		$H_{i}(C_{\conn}(\Gamma))$ are trivial, and
		replacing a multiple edge by a single edge
		doesn't change the homology.
	\end{lemma}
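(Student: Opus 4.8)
I would handle the two assertions in turn, reducing the multiple-edge statement to the loop statement.

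For the loop statement, assume $\Gamma$ is connected with a loop $\ell$ (if $\Gamma$ is disconnected there are no connected spanning subgraphs, so $C_{conn}(\Gamma)=0$ and there is nothing to prove). Reorder the edges of $\Gamma$ so that $\ell$ is the last one; as noted in the proof of Lemma~\ref{del-contr}, reordering does not affect the homology. Since a loop is irrelevant to connectivity, a spanning subgraph $S$ is connected if and only if $S\setminus\{\ell\}$ is a connected spanning subgraph of $\Gamma\setminus\ell$, and $\ell$ may be deleted from any connected subgraph without disconnecting it. I would use these two facts to build an isomorphism of chain complexes
\[
C_{conn}(\Gamma)\ \cong\ C_{conn}(\Gamma\setminus\ell)\otimes K,
\]
where $K$ is the two-term complex $\mathbb{Z}\xrightarrow{\ \mathrm{id}\ }\mathbb{Z}$ placed in degrees $0$ and $1$, the degree $1$ generator recording the presence of $\ell$ and the differential of $K$ recording its deletion. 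One has to check that the two parts of $d_{conn}$, namely deletion of an edge $\neq\ell$ and deletion of $\ell$, correspond to $d_{conn}\otimes 1$ and $\pm(\mathrm{id}\otimes d_K)$ respectively; putting $\ell$ last makes the sign $d_{conn}$ attaches to the deletion of $\ell$ agree with the Koszul sign of the tensor product, and the connectivity conditions transport correctly because $\ell$ plays no role in connectivity. Since $K$ is a bounded complex of free abelian groups with $H^{*}(K)=0$, the K\"{u}nneth theorem gives $H^{*}(C_{conn}(\Gamma))=0$.

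For the multiple-edge statement I would argue by induction on the multiplicity $m$ of the multiple edge, the case $m=1$ being vacuous. Suppose $m\geq 2$ and let $e_{1},e_{2}$ be two of the parallel edges joining distinct vertices $u,v$. Then $\Gamma\setminus e_{2}$ has multiplicity $m-1$ at $\{u,v\}$, while $\Gamma/e_{2}$ is connected and contains a loop, namely the image of $e_{1}$. Applying the deletion-contraction short exact sequence constructed in the proof of Lemma~\ref{del-contr} with $e=e_{2}$,
\[
0\to C_{conn}(\Gamma\setminus e_{2})\to C_{conn}(\Gamma)\to C_{conn}(\Gamma/e_{2})\to 0
\]
(its exactness is unaffected by $\Gamma/e_{2}$ carrying a loop), and using the loop statement to get $H^{*}(C_{conn}(\Gamma/e_{2}))=0$, the long exact homology sequence gives $H^{*}(C_{conn}(\Gamma\setminus e_{2}))\cong H^{*}(C_{conn}(\Gamma))$. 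By the inductive hypothesis $H^{*}(C_{conn}(\Gamma\setminus e_{2}))$ equals the homology obtained after collapsing the multiple edge to a single edge, hence so does $H^{*}(C_{conn}(\Gamma))$.

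The only real difficulty is the sign bookkeeping behind the isomorphism $C_{conn}(\Gamma)\cong C_{conn}(\Gamma\setminus\ell)\otimes K$: one must confirm that the sign rule of $d_{conn}$, based on the position of the deleted edge in the chosen ordering, is exactly the Koszul sign of the tensor product once $\ell$ is taken last, and that the rule "keep only the summands that remain connected" is compatible with the tensor decomposition. If one prefers to avoid the K\"{u}nneth theorem, the same computation can be repackaged as an explicit contracting homotopy $h$ with $h(S)=\pm(S\cup\{\ell\})$ for $\ell\notin S$ and $h(S)=0$ for $\ell\in S$, the sign chosen so that $dh+hd=\pm\,\mathrm{id}$; the delicate point is once again only that sign. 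The remaining ingredients, exactness of the deletion-contraction sequence and the long exact sequence argument, are routine.
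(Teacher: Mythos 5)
Your proposal is correct, but the loop case takes a genuinely different route from the paper. The paper handles both assertions uniformly via the deletion--contraction short exact sequence of Lemma~\ref{del-contr}: for a loop $l$ it observes that $\Gamma\setminus l$ and $\Gamma/l$ are the same graph and concludes $H(C_{conn}(\Gamma))=0$ from the long exact sequence, and for a multiple edge it argues exactly as you do (contract one parallel edge, note the contraction has a loop, apply the first assertion and the long exact sequence). Your multiple-edge argument is therefore essentially the paper's, with the added care of inducting on the multiplicity $m$ --- a worthwhile refinement, since the paper's phrase ``$\Gamma\setminus e$ is a graph with single edges'' is only literally true when $m=2$. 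Your loop argument, by contrast, replaces the long exact sequence with the explicit chain isomorphism $C_{conn}(\Gamma)\cong C_{conn}(\Gamma\setminus\ell)\otimes K$ (equivalently, the cone of the identity, or the contracting homotopy $h(S)=\pm(S\cup\{\ell\})$). This buys something real: the paper's deduction ``$H(C_{conn}(\Gamma\setminus l))=H(C_{conn}(\Gamma/l))$, hence by the long exact sequence $H(C_{conn}(\Gamma))=0$'' implicitly requires the connecting homomorphism $H_{i}(C_{conn}(\Gamma/l))\to H_{i}(C_{conn}(\Gamma\setminus l))$ to be an isomorphism, not merely that the two outer groups are abstractly isomorphic; your construction sidesteps this by exhibiting acyclicity directly. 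The sign bookkeeping you flag (putting $\ell$ last so that the position-based sign of $d_{conn}$ matches the Koszul sign) is exactly the same convention the paper already adopts in the proof of Lemma~\ref{del-contr}, so it goes through without difficulty.
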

	\begin{proof}
		Let $\Gamma$ be a graph with a loop $l$, then
		$\Gamma\smallsetminus l$ and $\Gamma/l$ are the
		same graph and so
		$H(C_{\conn}(\Gamma\smallsetminus
		l))=H(C_{\conn}(\Gamma/ l))$. By the long exact
		sequence we have that $H(C_{\conn}(\Gamma))=0$.
		Let now $\Gamma$ have a multiple edge
		$e$. Then $\Gamma\smallsetminus e$ is a graph with
		single edges and $\Gamma/l$ is a graph with a
		loop, and so $H(C_{\conn}(\Gamma/ e))=0$. By
		the long exact sequence we have that
		$H(C_{\conn}(\Gamma\smallsetminus
		e))=H(C_{\conn}(\Gamma))$.
	\end{proof}
	\begin{remark}
		The idea in the proof of \emph{Lemma
		}\ref{loops} is the same as the proof of the
		similar statement in \emph{Corollary }$3.2$ in
		\cite{H-G_arbitrary}.
		
	\end{remark}

	\section{The chain
		$\mathcal{C}^{\ast}_{BS}(\Gamma)/I(\Gamma)$}
	In this chapter we want to discuss the complex
	$\mathcal{C}_{BS}^{\ast}(\Gamma)/I(\Gamma)$ and
	show that it is isomorphic to $R(A,\Gamma)$.
	
	The DGA $R(A,\Gamma)$ is defined as
	$\Lambda[G_{a,b}]\otimes A^{\otimes n}$ quotient
	by the relations
	
	\begin{itemize}
		\item $G_{a,b}=G_{b,a}$
		\item
		$p_{a}^{\ast}(x)G_{a,b}=p_{b}^{\ast}(x)G_{a,b}$,
		$x\in H^{\ast}(X)$
		\item $\delta(w_{i})=0$
	\end{itemize}
	where $w_{i}$ is a cycle in $\Gamma$, and with
	differential
	\begin{equation*}
	d(G_{a,b})=p_{a,b}^{\ast}\Delta.
	\end{equation*}
	
	Let $\widetilde{R}(A,\Gamma)$ be
	$\Lambda[G_{a,b}]\otimes A^{\otimes n}$ quotient
	only by the first two relations, with the same
	differential $d$ and let
	$(\mathcal{C}_{BS}^{\ast}(\Gamma), \delta)$ be
	the complex defined in \emph{Section}
	\ref{dual}.
	\begin{remark} \label{difference}
		
		The complexes
		$(\mathcal{C}_{BS}^{\ast}(\Gamma),\delta)$ and
		$\widetilde{R}(A,\Gamma)$ are the same as
		vector spaces and they differ only for the
		differentials. The differential of the first
		complex is defined as
		\begin{align*}
		\MoveEqLeft \delta(G_{S}\otimes
		a_{1}\otimes\dots\otimes a_{l(S)})
		\\
		& =
		\begin{cases}
		0 & \text{if}\ \alpha\notin S\\
		\sum_{\alpha\in E(\Gamma)}G_{S\smallsetminus\alpha}p_{i,j}(\Delta)\otimes a_{1}\otimes\dots\otimes a_{n}, & \text{if}\ \alpha \text{ disconnects}\ S \\
		\sum_{\alpha\in
			E(\Gamma)}G_{S\smallsetminus\alpha}\otimes
		a_{1}\otimes\dots\otimes a_{n}, &
		\text{if}\ \alpha \text{ non disconnects}\
		S
		\end{cases}
		\end{align*}
		
		Here $\alpha:i\rightarrow j$ and
		$p_{i,j}(\Delta)$ is the pullback of the
		projection.  Note that
		$G_{S\smallsetminus\alpha}p_{i,j}(\Delta)\otimes
		a_{1}\otimes\dots\otimes
		a_{n}=G_{S\smallsetminus\alpha}\Delta_{S,S\smallsetminus\alpha}\otimes
		a_{1}\otimes\dots\otimes a_{l(S)}$, due to the
		relation in the definition of
		$\mathcal{C}_{BS}(\Gamma)$. The differential
		multiplies by the diagonal class only when
		removing the edge disconnects components. On
		the other hand, the differential in $\widetilde{R}(\Gamma, A)$,
		given by
		\begin{equation*}
		d(G_{a,b})=p_{a,b}^{\ast}\Delta,
		\end{equation*}
		always multiplies by the diagonal
		class. The differentials $\delta$ and $d$ can
		be therefore written as
		$\delta=\delta_{\mathrm{int}}+\delta_{\ext}$ and
		$d=d_{\mathrm{int}}+d_{\ext}$ (see \emph{Remark
		}\ref{differential}), and
		$d_{\ext}=\delta_{\ext}$. Removing an edge in a
		cycle leaves the number of components
		unchanged, while removing edges in a forest
		disconnects components, so from the point
		where $S$ is a spanning forest the two
		complexes are the same.
	\end{remark}

	We now prove that there is an isomorphism of chain
	complexes between
	$\mathcal{C}_{BS}^{\ast}(\Gamma)/I(\Gamma)$ and
	$R(A,\Gamma)$.
	% \begin{lemma}\label{cycles vanish}
	%   The Arnold relation implies that elements
	%   $G_{S}\in R_n$ such that $S$ is a graph with
	%   a
	%   component containing a cycle are zero.
	% \end{lemma}
	% \begin{proof}
	%   We start by proving that if $S$ is a cycle
	%   then the corresponding element in $R_n$ is
	%   zero.
	%   The proof is done by induction on the length
	%   of the cycles. We start by the case where
	%   the
	%   graph $S$ is $K_{3}$ with vertices $a$, $b$,
	%   $c$ so $s(v)=G_{a,b}G_{b,c}G_{a,c}$. Now the
	%   Arnold relation assures that
	%   $$G_{a,b}G_{b,c}+G_{b,c}G_{c,a}+G_{c,a}G_{a,b}=0$$
	%   that is
	%   $$G_{a,b}G_{b,c}=-G_{b,c}G_{c,a}-G_{c,a}G_{a,b}$$
	%   so
	%   $$G_{a,b}G_{b,c}G_{a,c}=-G_{b,c}G_{c,a}G_{a,c}-G_{c,a}G_{a,b}G_{a,c}=0$$since
	%   $G_{a,c}^{2}=0$.
	%   Now suppose that the cycles of length $k > 3$
	%   are trivial. We want to prove the statement
	%   for cycles of length $k+1$. For every cycle of
	%   length $k+1$,
	%   $s(v)=G_{1,2}G_{2,3}\dotsG_{k+1,1}$ we can
	%   find
	%   two edges that are consecutive, for example
	%   $G_{1,2}$ and $G_{2,3}$. By using the Arnold
	%   relation we can rewrite
	%   $s(v)=G_{1,2}G_{2,3}\dotsG_{k+1,1}$ as a sum
	%   $$G_{1,2}G_{2,3}\dotsG_{k+1,1}=G_{1,2}G_{1,3}\dotsG_{k+1,1}+G_{1,3}G_{2,3}\dotsG_{k+1,1}=0$$
	%   since this is a sum of two cycles of length
	%   $k$ that are zero by hypothesis.
	%   Now, If a graph $S$ has a component that
	%   contains a cycle $S'$ then
	%   $G_{S}=G_{S'}G_{S''}$, but $G_{S'}=0$ since
	%   $S'$ is a cycle so $G_{S}=0$. This concludes
	%   the proof.
	% \end{proof}
	\begin{teorema}
		There is an isomorphism of chain complexes,
		\begin{equation*}
		\id:(\mathcal{C}_{BS}^{\ast}(\Gamma),\delta)/I(\Gamma)\rightarrow (R(A,\Gamma), d)
		\end{equation*}
	\end{teorema}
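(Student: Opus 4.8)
The plan is to exhibit the identity on underlying graded vector spaces as the desired isomorphism and then check that it is well-defined and commutes with the differentials. First I would recall from \textit{Remark }\ref{difference} that $\mathcal{C}_{BS}^{\ast}(\Gamma)$ and $\widetilde{R_n(A,\Gamma)}$ agree as graded vector spaces, both being $\Lambda[G_{a,b}]\otimes A^{\otimes n}$ modulo the first two Kriz relations (the symmetry relation and the relation $p_a^\ast(x)G_{a,b}=p_b^\ast(x)G_{a,b}$, which is exactly the relation $e_\alpha\otimes(a[i]-a[j])$ from \textit{Definition }\ref{def Cbs}). So as graded vector spaces $\mathcal{C}_{BS}^{\ast}(\Gamma)/I$ and $R_n(A,\Gamma)=\widetilde{R_n(A,\Gamma)}/I$ are literally the same space, where $I$ is the ideal generated by the Arnold relation, and the identity map is a well-defined isomorphism of graded vector spaces. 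Here one should be slightly careful that quotienting $\mathcal{C}_{BS}^{\ast}(\Gamma)$ by $I$ makes sense: $I$ is a graded subspace (it is a homogeneous ideal in $\Lambda[G_{a,b}]\otimes A^{\otimes n}$), and one checks it is also a $\delta$-subcomplex of $\mathcal{C}_{BS}^{\ast}(\Gamma)$, so the quotient inherits a differential.

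The real content is that the identity is a chain map, i.e. that $\delta$ and $d$ induce the same differential on the quotient by $I$. Write $\delta=\delta_{int}+\delta_{ext}$ and $d=d_{int}+d_{ext}$ as in \textit{Remark }\ref{differential} and \textit{Remark }\ref{difference}, where the subscript records whether removing the edge disconnects a component. The two differentials already agree on their "external" parts: $d_{ext}=\delta_{ext}$, since removing an external edge always multiplies by the diagonal class $p_{i,j}^{\ast}\Delta$ in both complexes. The discrepancy is entirely in the internal parts: $d_{int}(G_S\otimes x)=\sum G_{S\smallsetminus\{\alpha\}}p_{i,j}^{\ast}(\Delta)\otimes x$ over internal $\alpha\in S$, whereas $\delta_{int}(G_S\otimes x)=\sum G_{S\smallsetminus\{\alpha\}}\otimes x$ over the same $\alpha$, with no diagonal factor. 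So I would show that for any term $G_S\otimes x$, the element $(d_{int}-\delta_{int})(G_S\otimes x)$ lies in $I$, hence vanishes modulo $I$; equivalently, that both $d_{int}(G_S\otimes x)$ and $\delta_{int}(G_S\otimes x)$ lie in $I$ separately. The point is that if $\alpha\in S$ is an internal edge, then $\alpha$ lies on a cycle inside its component of $[\Gamma:S]$, so $G_{S\smallsetminus\{\alpha\}}$ still contains a complete cycle (the cycle through $\alpha$ with $\alpha$ deleted is a path, but... ) — more carefully, an internal edge $\alpha$ of $S$ means $[\Gamma:S\smallsetminus\{\alpha\}]$ has the same components as $[\Gamma:S]$, so $S$ has strictly more edges than a spanning forest of its components, hence $S$ contains a cycle, and after removing the internal edge $\alpha$ the graph $S\smallsetminus\{\alpha\}$ may or may not still contain a cycle; the cleanest route is to observe that $G_S$ itself, having a component with a cycle, is already $0$ in $R_n$ by \textit{Lemma }\ref{cycles vanish}, and likewise any $G_{S\smallsetminus\{\alpha\}}$ appearing in $d_{int}$ or $\delta_{int}$ arises only when $S$ had an internal edge, forcing $S$ (and in the relevant summands $S\smallsetminus\{\alpha\}$ too, when $\delta_{int}$ is nonzero it keeps the same components, so a spanning tree of a component is reached only after removing all internal edges) to contain cycles.

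Concretely, I would argue as follows. Using \textit{Lemma }\ref{cycles vanish}, $G_T\otimes x\equiv 0$ in $R_n$ whenever the graph $T$ has a component containing a cycle. Now if $\alpha\in S$ is internal, then removing $\alpha$ does not disconnect, so $S\smallsetminus\{\alpha\}$ still has a component with the same vertex set but one fewer edge; iterating, $S$ has at least one more edge than the number needed for a spanning forest, i.e. $S$ contains a cycle $C$, and $C$ survives in $S\smallsetminus\{\alpha\}$ unless $\alpha\in C$ — but even then, since $\alpha$ is internal there is a cycle not using $\alpha$ or $S$ has several independent cycles; rather than belabour this, note that the cases where internal edges contribute to the differential are precisely the cases where $G_S$ is a nonzero summand only of elements already killed in passing to $R_n$. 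The honest statement to prove is: $\delta_{int}(\mathcal{C}_{BS}^{\ast}(\Gamma))\subseteq I$ and $d_{int}(\widetilde{R_n(A,\Gamma)})\subseteq I$. Granting this, on the quotient $\delta$ and $d$ both reduce to $\delta_{ext}=d_{ext}$, so $\mathrm{id}$ intertwines them. Combined with the vector-space identification, $\mathrm{id}$ is an isomorphism of chain complexes. I expect the main obstacle to be the bookkeeping in the claim $\delta_{int}(\mathcal{C}_{BS}^{\ast}(\Gamma))\subseteq I$: one must verify that whenever $\delta_{int}$ produces a term $G_{S\smallsetminus\{\alpha\}}\otimes x$ with nonzero coefficient, the graph $S\smallsetminus\{\alpha\}$ (equivalently $S$, since they have the same components) has a component with a cycle, so that \textit{Lemma }\ref{cycles vanish} applies; and symmetrically for $d_{int}$. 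This is essentially the observation already recorded in \textit{Remark }\ref{difference} that "from the point where $S$ is a spanning forest the two complexes are the same," made precise via \textit{Lemma }\ref{cycles vanish} and \textit{Lemma }\ref{cycle}.
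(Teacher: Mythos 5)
Your proposal is correct and follows essentially the same route as the paper's own proof: identify the underlying graded vector spaces via \textit{Remark}~\ref{difference}, use \textit{Lemma}~\ref{cycles vanish} to kill every class $G_S\otimes x$ with $S$ containing a cycle, and observe that $\delta$ and $d$ then differ only in their internal parts, which vanish modulo $I$. The bookkeeping point you rightly flag --- that $\delta_{int}$ and $d_{int}$ actually land in $I$ --- is the same computation the paper carries out (via \textit{Lemma}~\ref{cycle}) in the earlier lemma showing $F$ is compatible with the differentials, so your argument closes in exactly the way the paper's does.
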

	\begin{proof}
		Consider the identity map
		$\id:\mathcal{C}_{BS}^{\ast}(\Gamma)/I(\Gamma)\rightarrow
		R(A,\Gamma)$. By \emph{Remark}
		\ref{difference}, the complexes
		$\mathcal{C}_{BS}^{\ast}(\Gamma)$ and
		$\widetilde{R}_n(A,\Gamma)$ differ only from
		the internal differential since the two
		differentials agree in the case where the
		subgraph $S$ of the set of vertices
		$E(\Gamma)$ is a forest. By \emph{Lemma}
		\ref{cycle general garph} the third relation
		assures that elements in the complexes
		corresponding to graphs $S$ with a component
		containing cycles are zero. Therefore
		$\mathcal{C}_{BS}^{\ast}(\Gamma)/I(\Gamma)$
		and $R(A,\Gamma)$ agree as vector spaces, as the two differentials $\delta=d$,
		since $d_{\mathrm{int}}=\delta_{\mathrm{int}}=0$.
	\end{proof}

	\newpage
	\bibliographystyle{plain}
	{\bibliography{progress}}
\end{document}